\documentclass[12pt, a4paper]{extarticle}

\RequirePackage[OT1]{fontenc}
\RequirePackage{amsthm,amsmath}
\usepackage{amsthm, amssymb}
\RequirePackage[numbers]{natbib}
\RequirePackage[colorlinks,citecolor=blue,urlcolor=blue]{hyperref}


\usepackage{algorithm}
\usepackage{algorithmicx}
\usepackage{algpseudocode}
\usepackage{ThesisShortcuts}
\usepackage{graphicx}
\usepackage{placeins}

\setlength\textwidth{17cm}
\setlength\textheight{21cm}
\setlength\oddsidemargin{-0.5cm}

\newtheorem{theorem}{Theorem}[section]
\newtheorem{lemma}[theorem]{Lemma}
\newtheorem{proposition}[theorem]{Proposition}
\newtheorem{definition}[theorem]{Definition}

\newcommand{\indc}{\displaystyle{1\!\!1_{}}}
\newcommand{\Esp}{\mathbb{E}}
\newcommand{\ds}{\displaystyle}

\newcommand{\frc}[2]{\frac{\ds #1}{\ds #2}}
\newcommand{\ens}{\enspace}

\newcommand{\Prb}{\mathbb{P}}

\newcommand{\hLMMD}{\hat{\Delta}}
\newcommand{\hDn}{\hat{\Delta}}

\newcommand{\Reals}{\mathbb{R}}

\newcommand{\underoverset}[3]{\underset{#1}{\overset{#2}{#3}}}
\newcommand{\hsp}{\mathcal{H}}
\newcommand{\bk}{\bar{k}}
\newcommand{\bmu}{\bar{\mu}}
\newcommand{\tm}{\tilde{m}}
\newcommand{\tsig}{\tilde{\Sigma}}
\newcommand{\hm}{\hat{m}}
\newcommand{\hsig}{\hat{\Sigma}}
\newcommand{\NN}[2]{N[ #1, #2]}
\newcommand{\TT}{\mathcal{T}}
\newcommand{\NNoT}[2]{(N o \TT)[ #1, #2]}
\newcommand{\NNoTbis}[1]{(N o \TT)[ #1 ]}

\newcommand{\hypot}[1]{\mathbf{H}_{#1}}
\newcommand{\hbmu}{\hat{\bar{\mu}}_P}
\newcommand{\llim}[2]{\underset{#1 \to #2}{\longrightarrow}}

\algrenewcommand\algorithmicrequire{\textbf{Input:}}
\algrenewcommand\algorithmicreturn{\textbf{Output:}}


\newcommand{\prodscal}[2]{\langle #1 , #2 \rangle} 

%
%
%

\begin{document}

\title{A One-Sample Test for Normality with Kernel Methods }
\date{}



\author{\textsc{J\'er\'emie Kellner}\\
		\small Laboratoire de Math\'ematiques UMR 8524 CNRS - Universit\'e Lille 1 - MODAL team-project Inria\\
   \small \texttt{jeremie.kellner@ed.univ-lille1.fr}\\
   		\textsc{Alain Celisse}\\
   		\small Laboratoire de Math\'ematiques UMR 8524 CNRS - Universit\'e Lille 1 - MODAL team-project Inria\\
   \small \texttt{celisse@math.univ-lille1.fr}}

\maketitle

%




\begin{abstract}
We propose a new one-sample test for normality in a Reproducing Kernel Hilbert Space (RKHS). Namely, we test the null-hypothesis of belonging to a given family of Gaussian distributions. Hence our procedure may be applied either to test data for normality or to test parameters (mean and covariance) if data are assumed Gaussian.
Our test is based on the same principle as the MMD (Maximum Mean Discrepancy) which is usually used for two-sample tests such as homogeneity or independence testing. Our method makes use of a special kind of parametric bootstrap (typical of goodness-of-fit tests) which is computationally more efficient than standard parametric bootstrap. 
Moreover, an upper bound for the Type-II error highlights the dependence on influential quantities. Experiments illustrate the practical improvement allowed by our test in high-dimensional settings where common normality tests are known to fail. We also consider an application to covariance rank selection through a sequential procedure.
\end{abstract}

%
\tableofcontents

\section{Introduction}
Non-vectorial data such as DNA sequences or pictures often require a positive semi-definite kernel \cite{Aronszajn} which plays the role of a similarity function. For instance, two strings can be compared by counting the number of common substrings. 
Further analysis is then carried out in the associated reproducing kernel Hilbert space (RKHS), that is the Hilbert space spanned by the evaluation functions $k(x, .)$ for every $x$ in the input space. Thus embedding data into this RKHS through the feature map $x \mapsto k(x, .)$ allows to apply linear algorithms to initially non-vectorial inputs. 

Embedded data are often assumed to have a Gaussian distribution.
For instance supervised and unsupervised classification are performed in \cite{Bouveyron} by modeling each class as a Gaussian process. 
In \cite{Roth2006}, outliers are detected by modelling embedded data as a Gaussian random variable and by removing points lying in the tails of that Gaussian distribution.
This key assumption is also made in \cite{SrivastavaHiDimMeanTest} where a mean equality test is used in high-dimensional setting.
Moreover, Principal Component Analysis (PCA) and its kernelized version Kernel PCA \cite{KernelPCA} are known to be optimal for Gaussian data as these methods rely on second-order statistics (covariance). Besides, a Gaussian assumption allows to use Expectation-Minimization (EM) techniques to speed up PCA \cite{Roweis1998}. 
%

Depending on the (finite or infinite dimensional) structure of the RKHS, Cramer-von-Mises-type normality tests can be applied, such as Mardia's skewness test \cite{MardiaSkewKurt}, the Henze-Zirkler test \cite{HenzeZirkler} and the Energy-distance test \cite{SzekelyRizzoENormTest}. 
However these tests become less powerful as dimension increases (see Table 3 in \cite{SzekelyRizzoENormTest}). 
An alternative approach consists in randomly projecting high-dimensional objects on one-dimensional directions and then applying univariate test on a few randomly chosen marginals \cite{RandomProjTest}. This projection pursuit method has the advantage of being suited to high-dimensional settings. On the other hand, such approaches also suffer a lack of power because of the limited number of considered directions (see Section 4.2 in \cite{RandomProjTest}).

In the RKHS setting, \cite{Gretton_2007} introduced the Maximum Mean Discrepancy (MMD) which quantifies the gap between two distributions through distances between two elements of an RKHS. The MMD approach has been used for two-sample testing \cite{Gretton_2007} and for independence testing (Hilbert Space Independence Criterion, \cite{HSIC_2007}). However to the best of our knowledge, MMD has not been applied in a one-sample goodness-of-fit testing framework.

The main contribution of the present paper is to provide a one-sample statistical test of normality for data in a general Hilbert space (which can be an RKHS), by means of the MMD principle. This test features two possible applications: testing the normality of the data but also testing parameters (mean and covariance) if data are assumed Gaussian. The latter application encompasses many current methods that assume normality to make inferences on parameters, for instance to test the nullity of the mean \cite{SrivastavaHiDimMeanTest} or to assess the sparse structure of the covariance \citep{Svantesson2003, Bien2011}.

Once the test statistic is defined, a critical value is needed to decide whether to accept or reject the Gaussian hypothesis. In goodness-of-fit testing, this critical value is typically estimated by parametric bootstrap. Unfortunately, parametric bootstrap requires parameters to be computed several times, hence heavy computational costs (\textit{i.e.} diagonalization of covariance matrices). Our test bypasses the recomputation of parameters by implementing a faster version of parametric bootstrap. Following the idea of \cite{Kojadinovic2012}, this fast bootstrap method "linearizes" the test statistic through a Fr\'echet derivative approximation and thus can estimate the critical value by a \textit{weighted} bootstrap (in the sense of \cite{Burke2000}) which is computationally more efficient. Furthermore our version of this bootstrap method allows parameters estimators that are not explicitly "linear" (\textit{i.e.} that consist of a sum of independent terms) and that take values in possible infinite-dimensional Hilbert spaces.

Finally, we illustrate our test and present a sequential procedure that assesses the rank of a covariance operator. 
The problem of covariance rank estimation is adressed in several domains: functional regression \citep{Cardot2010, Brunel2013}, classification \cite{ZwaldKPCA} and dimension reduction methods such as PCA, Kernel PCA and Non-Gaussian Component Analysis \cite{NGCA_2006, Diederichs2010, Diederichs2013} where the dimension of the kept subspace is a crucial problem.  

Here is the outline of the paper. Section~\ref{ssec.framework} sets our framework and Section~\ref{sec.MMDTest} introduces the MMD and how it is used for our one-sample test.
The new normality test is described in Section~\ref{sec.new.test}, while both its theoretical and empirical performances are detailed in Section~\ref{sec.theoretical.assessment} in terms of control of Type-I and Type-II errors.
A sequential procedure to select covariance rank is presented in Section~\ref{sec.covranksel}.
%
%
\section{Framework}
\label{ssec.framework}
\sloppy 

Let $(\mathcal{H},\mathcal{A})$ be a measurable space, and $Y_1, \dots, Y_n \in \mathcal{H}$ denote a sample of \emph{independent and identically distributed} (\textit{i.i.d.}) random variables drawn from an unknown distribution $P\in\mathcal{P}$, where  $\mathcal{P}$ is a set of distributions defined on $\mathcal{A}$. 

In our framework, $\hsp$ is a separable Hilbert space endowed with a dot product $\langle~.~,~.~\rangle_\hsp$ and the associated norm $\Vert . \Vert_\hsp$ (defined by $\Vert h \Vert_{\hsp} = \langle h, h \rangle^{1/2}_\hsp$ for any $h \in \hsp$). Our goal is to test whether $Y_i$ is a \textit{Gaussian random variable (r.v.)} of $\hsp$, which is defined as follows.
\begin{definition}{(Gaussian random variable in a Hilbert space)}\\
\label{DefGauss1}
Let $( \Omega, \mathcal{F}, \mathbb{P} )$ a measure space, $(\mathcal{H}, \mathcal{F}^{'})$ a measurable space where $\mathcal{H}$ is a Hilbert space, and $Y: \Omega \to \mathcal{H}$ a measurable map.\\
$Y$ is a Gaussian r.v. of $\mathcal{H}$ if  $\langle Y, h \rangle_\hsp$ is a univariate Gaussian r.v. for any $h \in \mathcal{H}$.\\
Assuming that $\Esp_Y ||Y||_{\hsp} < +\infty$, there exists $m \in \mathcal{H}$ such that:
\begin{equation*}
\forall h \in \mathcal{H}, \enspace \langle m, h\rangle _\mathcal{H} = \Esp_Y \langle Y, h\rangle _\mathcal{H} \ens ,
\end{equation*}
and a (finite trace) operator $\Sigma: \mathcal{H} \to \mathcal{H}$ satisfying:
\begin{equation*}
\forall h, h^{'} \in \mathcal{H}, \enspace \langle \Sigma h, h^{'}\rangle _\mathcal{H} = \mathrm{cov} (\langle Y, h\rangle _\mathcal{H}, \langle Y, h^{'}\rangle _\mathcal{H}) \ens .
\end{equation*}
$m$ and $\Sigma$ are respectively the mean and the covariance operator of $Y$. The distribution of $Y$ is denoted $\mathcal{N}(m, \Sigma)$.
\end{definition}
%
%
%
More precisely, the tested hypothesis is that $Y_i$ follows a Gaussian distribution $\mathcal{N}(m_0, \Sigma_0)$, where $(m_0, \Sigma_0) \in \Theta_0$ and $\Theta_0$ is a subset of the parameter space $\Theta$. 
\footnote{
The parameter space $\Theta$ is endowed with the dot product $\langle (m, \Sigma), (m^{'}, \Sigma^{'}) \rangle_{\Theta} = \langle m, m^{'} \rangle_{\hsp} + \langle \Sigma, \Sigma^{'} \rangle_{HS(\hsp)}$, where $HS(\hsp)$ is the space of Hilbert-Schmidt (finite trace) operators $\hsp \to \hsp$ and $\langle \Sigma, \Sigma^{'} \rangle_{HS(\hsp)} = \sum_{i \geq 1} \langle \Sigma e_i, \Sigma^{'} e_i \rangle_\hsp$ for any complete orthonormal basis $(e_i)_{i \geq 1}$ of $\hsp$. 
Therefore, for any $\theta \in \Theta$, the tensor product $\theta^{\otimes 2}$ is defined as the operator $\Theta \to \Theta, \theta^{'} \mapsto \langle \theta, \theta^{'} \rangle_{\Theta} \theta$. For any $\theta \in \Theta$ and $\bar{h} \in H(\bk)$, the tensor product $\bar{h} \otimes \theta$ is the operator $\Theta \to H(\bk), \theta^{'} \mapsto \langle \theta, \theta^{'} \rangle_{\Theta} \bar{h}$.
}
Following \cite{LehRom_2005}, let us define the null hypothesis $\hypot{0}:\ P\in \mathcal{P}_0$, and the alternative hypothesis $\hypot{1}:\ P\not\in \mathcal{P}\setminus \mathcal{P}_0$ where the subset of null-hypotheses $\mathcal{P}_0 \subseteq \mathcal{P}$ is
\begin{align*}
\mathcal{P}_0 = \{ \mathcal{N}(m_0, \Sigma_0) \mid (m_0, \Sigma_0) \in \Theta_0 \} \ens .
\end{align*}
The purpose of a statistical test $T(Y_1,\ldots,Y_n)$ of $\hypot{0}$ against $\hypot{1}$ is to distinguish between the null ($\hypot{0}$) and the alternative ($\hypot{1}$) hypotheses. It requires two elements: a statistic $n \hat{\Delta}^2$ (which we define in Section \ref{sssec.new.test.stat}) that measures the gap between the empirical distribution of the data and the considered family of normal distributions $\mathcal{P}_0$, and a rejection region $\mathcal{R}_\alpha$ (at a level of confidence $0 < \alpha < 1$). $\hypot{0}$ is accepted if and only if $n \hat{\Delta}^2 \notin \mathcal{R}_\alpha$. The rejection region is determined by the distribution of $n \hat{\Delta}^2$ under the null-hypothesis such that the probability of wrongly rejecting $\hypot{0}$ (Type-I error) is controlled by $\alpha$.


%
%

\section{The Maximum Mean Discrepancy (MMD)}
\label{sec.MMDTest}

Following \cite{Gretton_2007} the gap between two distributions $P$ and $Q$ on $\hsp$ is measured by
\begin{equation}
\label{GeneralGOFStat}
\Delta(P, Q) = \sup_{f \in \mathcal{F}} |\Esp_{Y \sim P} f(Y) - \Esp_{Z \sim Q} f(Z)|,
\end{equation}
where $\mathcal{F}$ is a class of real valued functions on $\hsp$. 
%
%
Regardless of $\mathcal{F}$, \eqref{GeneralGOFStat} always defines a pseudo-metric \footnote{ A pseudo-metric $\Delta(.,.)$ satisfies for any $P$, $Q$,$R$ :
(i) $\Delta(P,P) = 0$, (ii) $\Delta(P,Q) = \Delta(Q,P)$,
and (iii) $\Delta(P,R) \leq \Delta(P,Q) + \Delta(Q,R)$.}
on probability distributions \cite{HilbertEmbed}. %

The choice of $\mathcal{F}$ is subject to two requirements: \textit{(i)} \eqref{GeneralGOFStat} must define a metric between distributions, that is
\begin{equation}
\label{NeededProperty}
\forall P, Q , \ens \Delta(P,Q) = 0 \Rightarrow P = Q \ens ,
\end{equation}
and \textit{(ii)} \eqref{GeneralGOFStat} must be expressed in an easy-to-compute form (without the supremum term).

To solve those two issues, several papers \citep{Kernel2sample, FastConsistentKern2Test, HSIC_2007} have considered the case when $\mathcal{F}$ is the unit ball of a reproducing kernel Hilbert space (RKHS) $H(\bar{k})$ associated with a positive semi-definite kernel $\bar{k}: \hsp \times \hsp \to \Reals$.
\begin{definition}{(Reproducing Kernel Hilbert space, \cite{Aronszajn})}
Let $\bk$ be a positive semi-definite kernel, i.e.
\begin{align*}
\forall x_1, \ldots, x_n \in \hsp, \forall \alpha_1, \ldots, \alpha_n, \sum_{i, j=1}^n \alpha_i \alpha_j \bk(x_i, x_j) \geq 0 \ens ,
\end{align*}
with equality if and only if $\alpha_1 = \ldots = \alpha_n = 0$.\\
There exists a unique Hilbert space $H(\bk)$ of real-valued functions on $\hsp$ which satisfies:
\begin{itemize}
\item $\forall x \in \hsp, \ens \bk(x, .) \in H(\bk)$  ,
\item $\forall f \in \hsp, \ens \forall x \in \mathcal{X}, \enspace \langle f, \bk(x, .)\rangle _{H(\bk)} = f(x)$  .
\end{itemize}
$H(\bk)$ is the reproducing kernel Hilbert space (RKHS) of $\bk$.
\end{definition}

Let $\Vert . \Vert_{H(\bk)} = \langle ., . \rangle^{1/2}$ be the norm of $H(\bk)$ and $\mathcal{B}_1(\bk) = \{f \in H(\bk) \mid ||f||_{H(\bk)} \leq 1\}$ denote the unit ball of $H(\bk)$. When $\mathcal{F} = \mathcal{B}_1(\bk)$, $\Delta(\cdot,\cdot)$ becomes a metric only for a class of kernels $\bk$ that are called  \emph{characteristic}.
\begin{definition}{(Characteristic kernel, \cite{Fukumizu2007_introchar})}\\
Let $\mathcal{F} = \mathcal{B}_1(\bk)$ in \eqref{GeneralGOFStat} for some kernel $\bk$. 
Then $\bk$ is a characteristic kernel if $\Delta(P, Q)=0$ implies $P=Q$.
\end{definition}

Most common kernels are characteristic: Gaussian kernels $\bk(x, y) = \exp( - \sigma \Vert x - y \Vert_{\hsp}^2 )$ where $\sigma > 0$, the exponential kernel $\bk(x, y) = \exp(\langle x, y \rangle_{\hsp})$ and Student kernels $\bk(x, y) = (1 + \sigma ||x - y||^2_{\hsp})^{-\alpha}$ where $\alpha, \sigma > 0$, to name a few. Several criteria for a kernel to be characteristic exist (see \citep{HilbertEmbed, CharKernGroups, Christmann2010}).
%

%
Moreover taking $\mathcal{F} = \mathcal{B}_1(\bk)$ enables to cast $\Delta(P,Q)$ as an easy to compute quantity. This is done by embedding any distribution $P$ in the RKHS $H(\bar{k})$ as follows.
\begin{definition}{(Hilbert space embedding, Lemma 3 from \cite{Kernel2sample})}
Let $P$ be a distribution such that $\Esp_{Y \sim P} \bk^{1/2}(Y, Y)<+\infty$.\\
Then there exists $\bmu_P \in H(\bk)$ such that for every $f \in H(\bk)$,
\begin{align*}
\langle \bmu_P, f \rangle _{H(\bk)} = \Esp f(Y) \ens .
\end{align*}
$\bmu_P$ is called the Hilbert space embedding of $P$ in $H(\bk)$.
\end{definition}
Thus $\Delta(P, Q)$ can be expressed as the gap between the Hilbert space embeddings of $P$ and $Q$ (Lemma~$4$ in \cite{Kernel2sample}):
\begin{align}
\Delta (P,Q) & =  \sup_{f \in H(\bk), ||f||_{H(\bk)} \leq 1} |\Esp_P f(Y) - \Esp_{Q} f(Z)|\notag\\
         & =  \sup_{f \in H(\bk), ||f||_{H(\bk)} \leq 1} |\langle \bmu_{P} - \bmu_{Q}, f\rangle _{H(\bk)}|\notag\\
         & =  ||\bmu_{P} - \bmu_{Q}||_{H(\bk)} \ens .
         \label{MMDGap}
\end{align}
\eqref{MMDGap} is called the Maximum Mean Discrepancy (MMD) between $P$ and $Q$.
%

%
%

Within our framework the goal is to compare $P$ the true distribution of the data with a Gaussian distribution $P_0 = \mathcal{N}(m_0, \Sigma_0)$ for some $(m_0, \Sigma_0) \in \Theta_0$. Hence the quantity of interest is
\begin{align}
\Delta^2 = \left\Vert \bar{\mu}_{P} - \bar{\mu}_{P_0} \right\Vert^2_{H(\bk)} \ens . 
\label{norm.mmd}
\end{align}
For the sake of simplicity, we use the notation
\begin{align*}
\bar{\mu}_{\mathcal{N}(m, \Sigma)} = \NN{m}{\Sigma}
\end{align*}
to denote the Hilbert space embedding of a Gaussian distribution.

%
%
%

\section{Kernel normality test}
\label{sec.new.test}

This section introduces our one-sample test for normality based on the quantity \eqref{norm.mmd}. 
As said in Section~\ref{ssec.framework}, we test the null-hypothesis $\hypot{0}: P \in \{ \mathcal{N}(m_0, \Sigma_0) \mid (m_0, \Sigma_0) \in \Theta_0 \}$ where $\Theta_0$ is a subset of the parameter space. Therefore our procedure may be used as test for normality or a test on parameter if data are assumed Gaussian. The test procedure is summed up in Algorithm~\ref{sumup.test}.
\begin{algorithm}[t]
\caption{Kernel Normality Test procedure}
\label{sumup.test}
\begin{algorithmic}
\normalsize
\Require{$Y_1, \dots, Y_n \in \hsp$, $\bk: \hsp \times \hsp \to \R$ (kernel) and $0<\alpha<1$ (test level).}
\begin{enumerate}
\item Compute $K = \croch{\langle Y_i, Y_j \rangle}_{i, j}$  (Gram matrix).
\item Compute $n \hLMMD^2$ (test statistic) from \eqref{gofstat} that depends on $K$ and $\bk$ (Section \ref{sssec.new.test.stat})
\item 
\begin{enumerate}
  \item Draw $B$ (approximate) independent copies of $n \hLMMD^2$ under $\hypot{0}$ by fast parametric bootstrap (Section \ref{sssc.qtl.estim}).
  \item Compute $\hat{q}_{\alpha, n}$ ($1-\alpha$ quantile of $n \hLMMD^2$ under $\hypot{0}$) from these replications.
\end{enumerate}
\end{enumerate}
\Return{ Reject $\hypot{0}$ if $n \hLMMD^2 > \hat{q}_{\alpha, n}$, and accept otherwise.}
\end{algorithmic}
\end{algorithm}

\subsection{Test statistic}
\label{sssec.new.test.stat}
As in \cite{Gretton_2007}, $\Delta^2$ can be estimated by replacing $\bar{\mu}_P $ with the sample mean 
\begin{align*}
\hbmu = \bar{\mu}_{\hat{P}} = (1/n) \sum_{i=1}^n \bar{k}(Y_i, . ) \ens ,
\end{align*} 
where $\hat{P} = (1/n) \sum_{i=1}^n \delta_{Y_i}$ is the empirical distribution. The null-distribution embedding $\NN{m_0}{\Sigma_0}$ is estimated by $\NN{\tilde{m}}{\tilde{\Sigma}}$ where $\tilde{m}$ and $\tilde{\Sigma}$ are appropriate and consistent (under $\hypot{0}$) estimators of $m_0$ and $\Sigma_0$. 
This yields the estimator 
\begin{align*}
\hLMMD^2 = \Vert \hbmu - \NN{\tilde{m}}{\tilde{\Sigma}} \Vert_{H(\bk)}^2 \ens ,
\end{align*}
which can be explicited by expanding the square of the norm and using the reproducing property of $H(\bk)$ as follows
\begin{align}
\hLMMD^2  = \frac{1}{n^2} \sum_{i, j = 1}^n \bar{k}(Y_i,Y_j)  - \frac{2}{n} \sum_{i = 1}^n \NN{\tilde{m}}{ \tilde{\Sigma}}(Y_i)  + \Vert \NN{\tilde{m}}{ \tilde{\Sigma}} \Vert_{H(\bk)}^2 \ens .
\label{gofstat}
\end{align}
Proposition~\ref{prop.def.L} ensures the consistency of the statistic  \eqref{gofstat}.   
%
\begin{proposition}\label{prop.def.L}
Assume that $P$ is Gaussian $\mathcal{N}(m_0, \Sigma_0)$ where $(m_0, \Sigma_0) \in \Theta_0$ and $(\tm, \tsig)$ are consistent estimators of $(m_0, \Sigma_0)$. Also assume that $\Esp_P \bar{k}(Y, Y)~<~+~\infty$ and $\NN{m}{\Sigma}$ is a continuous function of $(m, \Sigma)$ on $\Theta_0$.
Then $\hLMMD^2$ is a consistent estimator of $\Delta^2$. 
\end{proposition}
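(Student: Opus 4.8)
The plan is to decompose the estimation error into two independent sources of fluctuation — the empirical fluctuation of the data embedding $\hbmu$ around its population counterpart $\bmu_P$, and the plug-in fluctuation of the estimated Gaussian embedding $\NN{\tm}{\tsig}$ around $\NN{m_0}{\Sigma_0}=\bmu_{P_0}$ — to control each separately, and then to recombine them through the (reverse) triangle inequality together with continuity of the squared norm. Since $\hLMMD = \Vert \hbmu - \NN{\tm}{\tsig} \Vert_{H(\bk)}$ and $\Delta = \Vert \bmu_P - \bmu_{P_0} \Vert_{H(\bk)}$, subtracting the two elements inside the norms and applying the reverse triangle inequality in $H(\bk)$ yields
\begin{align*}
\left| \hLMMD - \Delta \right| \leq \Vert \hbmu - \bmu_P \Vert_{H(\bk)} + \Vert \NN{\tm}{\tsig} - \bmu_{P_0} \Vert_{H(\bk)} \ens ,
\end{align*}
so it suffices to show that each term on the right converges to $0$ in probability; consistency of $\hLMMD^2$ for $\Delta^2$ then follows by the continuous mapping theorem applied to $t \mapsto t^2$.

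First I would treat the empirical term. Writing $\hbmu - \bmu_P = (1/n) \sum_{i=1}^n \left( \bk(Y_i, \cdot) - \bmu_P \right)$ as an average of centered i.i.d. elements of $H(\bk)$ (centered because $\Esp_P \bk(Y,\cdot) = \bmu_P$ by definition of the Hilbert space embedding), a direct second-moment computation in which the cross terms vanish by independence gives
\begin{align*}
\Esp \Vert \hbmu - \bmu_P \Vert_{H(\bk)}^2 = \frac{1}{n} \left( \Esp_P \bk(Y,Y) - \Vert \bmu_P \Vert^2_{H(\bk)} \right) \ens ,
\end{align*}
where I use the reproducing property $\Vert \bk(Y, \cdot) \Vert^2_{H(\bk)} = \bk(Y,Y)$. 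The assumption $\Esp_P \bk(Y,Y) < +\infty$ guarantees this variance is finite and of order $1/n$, hence $\hbmu \to \bmu_P$ in quadratic mean and therefore in probability.

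Next I would treat the plug-in term. By hypothesis $(\tm, \tsig)$ is consistent for $(m_0, \Sigma_0)$ and $\NN{\cdot}{\cdot}$ is continuous on $\Theta_0$, so the continuous mapping theorem gives $\NN{\tm}{\tsig} \to \NN{m_0}{\Sigma_0} = \bmu_{P_0}$ in probability in $H(\bk)$. Combining the two limits, the right-hand side of the displayed inequality tends to $0$, whence $\hLMMD \to \Delta$ and finally $\hLMMD^2 \to \Delta^2$ in probability.

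The main obstacle is the empirical term in the first step: one must pass from scalar moment control to the convergence of a Hilbert-space–valued average, which is exactly where the reproducing property and the integrability assumption $\Esp_P \bk(Y,Y) < +\infty$ are essential, since together they legitimize the Bochner-level second-moment identity and replace a naive use of a scalar law of large numbers. The plug-in step is comparatively routine once the continuity of $\NN{\cdot}{\cdot}$ is granted, and the final recombination is purely a matter of the triangle inequality and the continuity of squaring.
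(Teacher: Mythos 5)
Your proof is correct and follows essentially the same route as the paper: control the empirical embedding $\hbmu \to \bmu_P$, use continuity of $\NN{\cdot}{\cdot}$ with consistency of $(\tm,\tsig)$ for the plug-in term, and conclude by continuity of the squared norm. The only (immaterial) difference is that you establish $\hbmu \to \bmu_P$ in probability via a direct second-moment computation, whereas the paper invokes the almost-sure law of large numbers in Hilbert spaces under the same finite-variance condition.
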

\begin{proof}
First, note that $\bmu_P$ exists since $\Esp \bk(Y, Y)<+\infty$ implies $\Esp \bk^{1/2}(Y, Y)<+\infty$. By the Law of Large Numbers in Hilbert Spaces \cite{HoffmannJorgensen1976}, $\hbmu \llim{n}{\infty} \bmu_P$ $P$-almost surely since $\Esp \Vert \bk(Y,\cdot) - \bmu_P \Vert^2_{H(\bk)} = \Esp \bk(Y,Y) - \Esp \bk(Y, Y^{'}) \leq \Esp \bk(Y,Y) + \Esp^2 \bk(Y, Y^{'}) < +\infty$. The continuity of $\NN{m}{\Sigma}$ (with respect to $(m, \Sigma)$) and the consistency of $(\tilde{m}, \tilde{\Sigma})$ yield $\NN{\tilde{m}}{\tilde{\Sigma}} \overset{P-a.s.}{\llim{n}{\infty}} \NN{m_0}{\Sigma_0}$ $P$-a.s.. Finally, the continuity of $\Vert \cdot \Vert^2_\hsp$ leads to $\hLMMD^2 \overset{P-a.s.}{\llim{n}{\infty}} \Delta^2$.
\end{proof}
The expressions for $\NN{\tilde{m}}{\tilde{\Sigma}}(Y_i)$ and $\Vert \NN{\tilde{m}}{\tilde{\Sigma}} \Vert_{H(\bk)}^2$ in \eqref{gofstat} depend on the choice of $\bar{k}$. Those are given by Propositions \ref{prop.teststat.gauss} and \ref{prop.teststat.exp} when $\bk$ is Gaussian and exponential. Note that in these cases, the continuity assumption of $\NN{m}{\Sigma}$ required by Proposition~\ref{prop.def.L} is satisfied.

Before stating Propositions \ref{prop.teststat.gauss} and \ref{prop.teststat.exp}, the following notation is introduced.
For a symmetric operator $L : \hsp \to \hsp$ with eigenexpansion $L = \sum_{r \geq 1} \lambda_r \Psi^{\otimes 2}$, its determinant is denoted $|L| = \prod_{r \geq 1} \lambda_r$. For any $q \in \Reals$, the operator $L^q$ is defined as $L^q = \sum_{r \geq 1} \lambda^q_r \indc_{\{\lambda_r > 0\}} \Psi_r^{\otimes 2}$.
\begin{proposition}{(Gaussian kernel case)}
\label{prop.teststat.gauss}
Let $\bk(.,.) = \exp(-\sigma \Vert .-. \Vert^2_\hsp)$ where $\sigma > 0$. Then, 
\begin{align}
\NN{\tilde{m}}{\tilde{\Sigma}}(.) & = |I + 2 \sigma \tilde{\Sigma}|^{-1/2} \exp\left( - \sigma ||(I + 2 \sigma \tilde{\Sigma})^{-1/2} (. - \tilde{m}) ||_\hsp^2 \right) \ens ,\notag \\
\Vert \NN{\tilde{m}}{\tilde{\Sigma}} \Vert_{H(\bk)}^2 & = |I + 4 \sigma \tilde{\Sigma} |^{-1/2} \notag \ens .
\end{align}
\end{proposition}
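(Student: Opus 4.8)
The plan is to reduce both identities to Gaussian integrals and then to scalar computations after diagonalizing $\tsig$. By the defining property of the Hilbert space embedding together with the reproducing property, $\NN{\tm}{\tsig}$ is the element of $H(\bk)$ representing $f \mapsto \Esp_{Z\sim\mathcal{N}(\tm,\tsig)} f(Z)$; taking $f = \bk(x,\cdot)$ and using the reproducing property to evaluate at a point $x$ gives
\[
\NN{\tm}{\tsig}(x) = \langle \NN{\tm}{\tsig}, \bk(x,\cdot)\rangle_{H(\bk)} = \Esp_{Z}\, \exp\!\left(-\sigma\|x - Z\|^2_\hsp\right),
\]
while the squared norm is $\|\NN{\tm}{\tsig}\|^2_{H(\bk)} = \Esp_{Z,Z'}\bk(Z,Z')$ for two independent draws $Z,Z'\sim\mathcal{N}(\tm,\tsig)$. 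Existence of the embedding is immediate here since $\bk\le 1$, hence $\Esp\bk^{1/2}(Z,Z)<+\infty$.

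First I would diagonalize $\tsig = \sum_{r\geq 1}\lambda_r \Psi_r^{\otimes 2}$ and complete $(\Psi_r)_r$ into a complete orthonormal basis of $\hsp$ by adjoining an orthonormal basis of $\ker\tsig$, which is legitimate because $\tsig$ is trace class, hence compact and self-adjoint. In this basis the coordinates $Z_r = \langle Z, \Psi_r\rangle$ are independent univariate Gaussians $\mathcal{N}(m_r,\lambda_r)$ with $m_r = \langle \tm, \Psi_r\rangle$, those with $\lambda_r = 0$ being deterministic. Since the Gaussian kernel factorizes as $\exp(-\sigma\|x-Z\|^2_\hsp) = \prod_r \exp(-\sigma(x_r - Z_r)^2)$, and the partial products are decreasing and bounded in $(0,1]$, monotone convergence together with independence of the coordinates turns the expectation into an infinite product of scalar expectations.

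Next, the scalar integral $\Esp_{Z_r\sim\mathcal{N}(m_r,\lambda_r)}\exp(-\sigma(x_r - Z_r)^2)$ is evaluated by completing the square, giving $(1+2\sigma\lambda_r)^{-1/2}\exp(-\sigma(x_r - m_r)^2/(1+2\sigma\lambda_r))$, uniformly valid including the case $\lambda_r = 0$. Recombining, the prefactors multiply to $\prod_r(1+2\sigma\lambda_r)^{-1/2} = |I+2\sigma\tsig|^{-1/2}$, and the exponents sum to $\sigma\sum_r (x_r-m_r)^2/(1+2\sigma\lambda_r) = \sigma\|(I+2\sigma\tsig)^{-1/2}(x-\tm)\|^2_\hsp$, since $(I+2\sigma\tsig)^{-1/2}$ has eigenvalues $(1+2\sigma\lambda_r)^{-1/2}$ in the basis $(\Psi_r)_r$. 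This yields the first displayed formula. For the squared norm I would observe that $Z-Z'\sim\mathcal{N}(0,2\tsig)$, so $\|\NN{\tm}{\tsig}\|^2_{H(\bk)} = \Esp_{W\sim\mathcal{N}(0,2\tsig)}\exp(-\sigma\|W\|^2_\hsp) = \NN{0}{2\tsig}(0)$; substituting $\tm=0$, $\tsig\mapsto 2\tsig$ and $x=0$ into the first formula collapses the exponential to $1$ and leaves $|I+4\sigma\tsig|^{-1/2}$.

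The main obstacle is analytic rather than algebraic: one must ensure the infinite-dimensional Gaussian integral is well defined and the product/expectation interchange is licit. The finite-trace condition $\sum_r\lambda_r<+\infty$ is exactly what guarantees $\sum_r\log(1+2\sigma\lambda_r)<+\infty$, so the determinant $|I+2\sigma\tsig|$ is a finite positive number and the infinite product converges; similarly the exponent series is dominated by $\sigma\|x-\tm\|^2_\hsp<+\infty$. Once these convergence points are secured, the remaining work is the routine one-dimensional Gaussian computation and the bookkeeping of eigenvalues.
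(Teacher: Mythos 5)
Your proof is correct and follows essentially the same route as the paper's: diagonalize $\tsig$, factorize the Gaussian kernel over the eigenbasis, interchange expectation and infinite product (your monotone-convergence argument and the paper's domination by $1$ are interchangeable here), and evaluate the one-dimensional Gaussian integrals by completing the square. The only minor divergence is your computation of the squared norm via $Z - Z' \sim \mathcal{N}(0, 2\tsig)$ followed by substitution into the first formula, which is slightly slicker than the paper's iterated conditional expectation but yields the same result.
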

\begin{proposition}{(Exponential kernel case)}
\label{prop.teststat.exp}
Let $\bk(.,.) = \exp(\langle  ., . \rangle _\hsp)$. Assume that the largest eigenvalue of $\tilde{\Sigma}$ is smaller than $1$. Then, 
\begin{align}
\NN{\tilde{m}}{\tilde{\Sigma}}(.) & =  \exp\left( \langle \tilde{m}, .\rangle _\hsp +  \frac{1}{2} \langle \tilde{\Sigma} . , .\rangle _\hsp \right) \ens , \notag \\
||\NN{\tilde{m}}{\tilde{\Sigma}} ||^2 & = |I - \tilde{\Sigma}^2 |^{-1/2} \exp\left(\Vert(I - \tilde{\Sigma}^2)^{-1/2} \tilde{m}  \Vert_\hsp^2\right) \ens . \notag
\end{align}
\end{proposition}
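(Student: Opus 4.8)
The plan is to derive both identities from the defining property of the embedding $\NN{\tm}{\tsig}$, namely $\langle \NN{\tm}{\tsig}, f \rangle_{H(\bk)} = \Esp_{Z \sim \mathcal{N}(\tm, \tsig)} f(Z)$ for every $f \in H(\bk)$, combined with the closed form of the moment generating function of a univariate Gaussian. The only structural input needed is Definition~\ref{DefGauss1}: for $Z \sim \mathcal{N}(\tm, \tsig)$ and any $x \in \hsp$, the scalar $\langle x, Z \rangle_\hsp$ is Gaussian with mean $\langle \tm, x \rangle_\hsp$ and variance $\langle \tsig x, x \rangle_\hsp$.

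For the first identity, I would apply the reproducing property to $f = \bk(x, \cdot)$, which gives $\NN{\tm}{\tsig}(x) = \langle \NN{\tm}{\tsig}, \bk(x, \cdot) \rangle_{H(\bk)} = \Esp_Z \exp(\langle x, Z \rangle_\hsp)$. Since $\langle x, Z \rangle_\hsp$ is univariate Gaussian, the scalar identity $\Esp e^{W} = \exp(\Esp W + \tfrac{1}{2}\mathrm{var}(W))$ immediately yields $\NN{\tm}{\tsig}(x) = \exp(\langle \tm, x \rangle_\hsp + \tfrac{1}{2}\langle \tsig x, x \rangle_\hsp)$, which is the stated formula.

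For the squared norm, the starting point is $\Vert \NN{\tm}{\tsig} \Vert_{H(\bk)}^2 = \langle \NN{\tm}{\tsig}, \NN{\tm}{\tsig} \rangle_{H(\bk)} = \Esp_{Z, Z'} \bk(Z, Z')$, where $Z, Z'$ are independent copies of $\mathcal{N}(\tm, \tsig)$; this rewriting uses the embedding property once per factor and the reproducing property to collapse $\langle \bk(Z, \cdot), \bk(Z', \cdot) \rangle_{H(\bk)} = \bk(Z, Z')$. Conditioning on $Z'$ and invoking the first identity reduces the computation to the single Gaussian expectation $\Esp_{Z'} \exp(\langle \tm, Z' \rangle_\hsp + \tfrac{1}{2}\langle \tsig Z', Z' \rangle_\hsp)$. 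I would then diagonalize $\tsig = \sum_{r \geq 1} \lambda_r \Psi_r^{\otimes 2}$ and expand $Z'$ in the eigenbasis $(\Psi_r)_r$ through its Karhunen--Lo\`eve representation, so that the expectation factorizes into a product over $r$ of one-dimensional integrals of the form $\Esp \exp(a_r \xi^2 + b_r \xi)$ with $\xi \sim \mathcal{N}(0, \lambda_r)$, where $a_r, b_r$ are explicit functions of $\lambda_r$ and of $m_r = \langle \tm, \Psi_r \rangle_\hsp$.

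The main obstacle is precisely the convergence of these one-dimensional integrals: $\Esp \exp(a_r \xi^2 + b_r \xi)$ is finite if and only if $2 a_r \lambda_r < 1$, which here reduces to $\lambda_r < 1$, and this is exactly the hypothesis that the largest eigenvalue of $\tsig$ is smaller than $1$. A short computation evaluates each factor as $(1 - \lambda_r^2)^{-1/2}$ times the exponential of an explicit quadratic term in $m_r$; collecting the prefactors produces the determinant $\prod_{r \geq 1}(1 - \lambda_r^2)^{-1/2} = |I - \tsig^2|^{-1/2}$, and summing the exponents reassembles the stated quadratic form in $\tm$. Two points require care in infinite dimension: the interchange of the expectation with the infinite product, which I would justify by dominated convergence on the partial products, and the convergence of both the infinite product and the exponent series, which follow from $\tsig$ being trace class ($\sum_{r} \lambda_r < +\infty$, hence $\sum_{r} \lambda_r^2 < +\infty$) together with the bound on the quadratic series by $(1 - \lambda_{\max})^{-1}\Vert \tm \Vert_\hsp^2$, where $\lambda_{\max} = \max_r \lambda_r < 1$.
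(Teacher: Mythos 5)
Your strategy coincides with the paper's own proof. The first identity is obtained exactly as in the appendix: the reproducing property reduces $\NN{\tm}{\tsig}(x)$ to $\Esp_Z \exp(\langle x, Z\rangle_\hsp)$ and the univariate Gaussian moment generating function finishes it. For the squared norm, both you and the paper write $\Vert \NN{\tm}{\tsig}\Vert^2_{H(\bk)} = \Esp_{Z,Z'}\bk(Z,Z')$, diagonalize $\tsig$, factorize over the eigenbasis, justify the interchange of expectation and infinite product by dominated convergence on the partial products, and locate the role of the hypothesis $\lambda_{\max}<1$ in the convergence of the one-dimensional integrals. If anything, your outline is more careful than the appendix on one point: the paper's computation of $\Esp_{Z'}\bigl(\exp(\langle Z,\Psi_i\rangle\langle Z',\Psi_i\rangle)\mid Z\bigr)$ silently drops the mean and only derives the determinant factor $|I-\tsig^2|^{-1/2}$, i.e.\ it effectively treats the case $\tm=0$, whereas you carry $m_r=\langle\tm,\Psi_r\rangle_\hsp$ through the whole argument.

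The one genuine gap is that you assert, rather than compute, that ``summing the exponents reassembles the stated quadratic form.'' Carrying out your own step: with $\langle Z',\Psi_r\rangle = m_r+\xi$, $\xi\sim\mathcal{N}(0,\lambda_r)$, the $r$-th exponent is $\tfrac{\lambda_r}{2}\xi^2 + m_r(1+\lambda_r)\xi + m_r^2(1+\tfrac{\lambda_r}{2})$, and the Gaussian integral gives the factor $(1-\lambda_r^2)^{-1/2}\exp\bigl(m_r^2/(1-\lambda_r)\bigr)$, so the product is $|I-\tsig^2|^{-1/2}\exp\bigl(\Vert(I-\tsig)^{-1/2}\tm\Vert_\hsp^2\bigr)$ --- with $(I-\tsig)^{-1/2}$, not the $(I-\tsig^2)^{-1/2}$ appearing in the proposition. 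A one-dimensional sanity check with $m=1$, $\lambda=1/2$ gives $\Esp\, e^{ZZ'}=(3/4)^{-1/2}e^{2}$, which matches $\exp\bigl(m^2/(1-\lambda)\bigr)$ and not $\exp\bigl(m^2/(1-\lambda^2)\bigr)$. So your method is sound and yields a correct closed form, but the final assembly must actually be performed: as it stands, the claim that the result agrees with the displayed formula is false, and the discrepancy (whether a typo in the statement or an error to be corrected in your exponent) is exactly the part your proposal skips. Note that the paper's proof cannot arbitrate here, since it never derives the exponential factor at all.
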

The proofs of Propositions \ref{prop.teststat.gauss} and \ref{prop.teststat.exp} are provided in Appendix \ref{appendix.proof.prop.teststat}.

%
For most estimators $(\tm, \tsig)$, the quantities provided in Propositions \ref{prop.teststat.exp} and \ref{prop.teststat.gauss} are computable via the Gram matrix $K = \left[ \langle Y_i, Y_j \rangle_\hsp \right]_{1 \leq i, j \leq n}$. For instance, asumme that $(\tm, \tsig)$ are the classical estimators $(\hm, \hsig)$ where $\hm = (1/n) \sum_{i=1}^n Y_i$ and $\hsig = (1/n) \sum_{i=1}^n (Y_i - \hm)^{\otimes 2}$. Let $I_n$ and $J_n$ be respectively the $n \times n$ identity matrix and the $n \times n$ matrix whose all entries equal $1$, $H = I_n - (1/n) J_n$ and $K_c = H K H$ be the centered Gram matrix. Then for any $\Box \in \Reals$,
\begin{align}
\left| I + \Box \hsig  \right| = \det\left( I_n + \frac{\Box}{n} K_c    \right) \ens ,
\notag 
\end{align}
where $\det(.)$ denotes the determinant of a matrix and
\begin{align}
\left\Vert  (I + \Box \hsig)^{-1/2} Y_i \right\Vert^2_\hsp = \left[(I_n + \frac{\Box}{n} K_c)^{-1} \right]_{i,i} \ens , 
\notag 
\end{align}
where $[.]_{ii}$ denotes the entry in the $i$-th row and the $i$-th column of a matrix.

\subsection{Estimation of the critical value}
\label{sssc.qtl.estim}

Designing a test with confidence level $0 < \alpha <  1$ 
requires to compute the $1 - \alpha$ quantile of the $n 
\hLMMD^2$ distribution under $\hypot{0}$ denoted by $q_{\alpha, n}$. Thus $q_{\alpha, n}$ serves as a critical value to decide whether the test statistic $\hat{\Delta}^2$ is significantly close to $0$ or not, so that the probability of wrongly rejecting $\hypot{0}$ (Type-I error) is at most $\alpha$.   
\subsubsection{Classical parametric bootstrap}
In the case of a goodness-of-fit test, a usual way of estimating $q_{\alpha, n}$ is to perform a parametric bootstrap. Parametric bootstrap consists in generating $B$ samples of $n$ \iid random variables $Y_1^{(b)}, \ldots, Y_n^{(b)} \sim \mathcal{N}(\tilde{m}, \tilde{\Sigma})$ ($b = 1, \ldots, B$). Each of these $B$ samples is used to compute a bootstrap replication 
\begin{align}
[n \hLMMD^2]^{b} = n \Vert \hat{\bmu}_P^b - \NN{\tm^b}{\tsig^b} \Vert^2_{H(\bk)}  \ens ,
\label{slow.bt.rep}
\end{align}
where $\hat{\bmu}_P^b$, $\tm^b$ and $\tsig^b$ are the estimators of $\mu_P$, $m$ and $\Sigma$ based on $Y_1^b, \ldots, Y_n^b$.

It is known that parametric bootstrap is asymptotically valid \cite{Stute1993}. Namely, under $\hypot{0}$,
\begin{align*}
\forall b = 1, \ldots, B, \quad \left(n \hLMMD^2, \ens [n \hLMMD^2]^{b} \right) \underoverset{n \to +\infty}{\mathcal{L}}{\longrightarrow} (U, U^{'}) \ens ,
\end{align*}

where $U$ and $U^{'}$ are \iid random variables. In a  nutshell, \eqref{slow.bt.rep} is approximately an independent copy of the test statistic $n \hLMMD^2$ (under $\hypot{0}$). Therefore $B$ replications $[n \hLMMD^2]^{b}$ can be used to estimate the $1 - \alpha$ quantile $q_{\alpha, n}$ of $n \hLMMD^2$ under the null-hypothesis.

However, this approach suffers heavy computational costs. In particular, each bootstrap replication involves the estimators $(\tilde{m}^{b}, \tilde{\Sigma}^{b})$. In our case, this leads to compute the eigendecomposition of the $B$ Gram matrices $K^{b} = [\langle Y_i^{b}, Y_j^{b}\rangle ]_{i, j}$ of size $n \times n$ hence a complexity of order $\mathcal{O}(B n^3)$.
\subsubsection{Fast parametric bootstrap}
\label{sssec.fast.parambt}
This computational limitation is alleviated by means of another strategy described in \cite{Kojadinovic2012}. Let us consider in a first time the case when the estimators of $m$ and $\Sigma$ are the classical empirical mean and covariance $\hm = (1/n) \sum_{i=1}^n Y_i$ and $\hsig = (1/n) \sum_{i=1}^n (Y_i - \hm)^{\otimes 2}$. Introducing the Fr\'echet derivative \cite{Frigyik2008} $D_{(m, \Sigma)} N$ at $(m, \Sigma)$ of the function 
\begin{align*}
N : \Theta \to H(\bk), \ens (m, \Sigma) \mapsto \NN{m}{\Sigma} \ens ,
\end{align*}
our bootstrap method relies on the following approximation
\begin{align}
\sqrt{n}\left(\hat{\bmu}_P - \NN{\hm}{\hsig} \right) \simeq & \ens \sqrt{n}\left( \hat{\bmu}_P - \underbrace{\NN{m_0}{\Sigma_0}}_{=\bar{\mu}_P \text{ under } \mathcal{H}_0} - D_{(m_0, \Sigma_0)} \NN{\hat{m} - m_0}{\hat{\Sigma} - \Sigma_0 } \right) \notag \\
	\simeq & \ens \frac{1}{\sqrt{n}} \sum_{i=1}^n [\bar{k}(Y_i, .) - \bar{\mu}_P] \notag \\
	& \qquad - D_{(m_0, \Sigma_0)} \NN{Y_i - m_0}{ (Y_i - m_0)^{\otimes 2} - \Sigma_0 } \ens . \label{stattest.approx.bt}
\end{align}
Since \eqref{stattest.approx.bt} consists of a sum of centered independent terms (under $\hypot{0}$), it is possible to generate approximate independent copies of this sum via \textit{weighted} bootstrap \cite{Burke2000}. Given $Z_1^b, \ldots, Z_n^b$ \iid real random variables of mean zero and unit variance and $\bar{Z}^b$ their empirical mean, a bootstrap replication of \eqref{stattest.approx.bt} is given by
\begin{align}
\frac{1}{\sqrt{n}} \sum_{i=1}^n (Z_i^b - \bar{Z}^b) \left\{\bar{k}(Y_i, .)  - D_{(m_0, \Sigma_0)} \NN{Y_i}{(Y_i - m_0)^{\otimes 2} } \right\} \ens .
\label{fast.parambt1}
\end{align}
Taking the square of the norm of \eqref{fast.parambt1} in $H(\bk)$ and replacing the unknown true parameters $m_0$ and $\Sigma_0$ by their estimators $\hm$ and $\hsig$ yields the bootstrap replication $[n \hLMMD^2]^b_{fast}$ of $n \hLMMD^2$
\begin{align}
[n \hLMMD^2]^b_{fast} \overset{\Delta}{=} \left\Vert \sqrt{n} \left(\hat{\bmu}_P^b - D_{(\hat{m}, \hat{\Sigma})} \NN{\hat{m}^{b}}{\hat{\Sigma}^{b}} \right) \right\Vert ^2_{H(\bar{k})} \ens ,
\label{fast.parambt2}
\end{align}   
where 
\begin{align*}
\hat{\bmu}_P^b & = (1 /n) \sum_{i=1}^n (Z_i^b - \bar{Z}^b) \bk(Y_i, .) \ens , \\
 \hat{m}^{b} & = (1/n) \sum_{i=1}^n (Z_i^b - \bar{Z}^b) Y_i \ens , \\
 \hat{\Sigma}^{b} & = (1/n) \sum_{i=1}^n (Z_i^b - \bar{Z}^b) (Y_i - \hat{m}^b)^{\otimes 2}.
\end{align*}
%
%
%

Therefore this approach avoids the recomputation of parameters for each bootstrap replication, hence a computationnal cost of order $\mathcal{O}(B n^2)$ instead of $\mathcal{O}(B n^3)$. This is illustrated empirically in the right half of Figure \ref{fast.parambt.perf}.
\subsubsection{Fast parametric bootstrap for general parameter estimators}
\label{sssec.fast.parambt.gen}
The bootstrap method proposed by \cite{Kojadinovic2012} used in Section~\ref{sssec.fast.parambt} requires that the estimators $(\tm, \tsig)$ can be written as a sum of independent terms with an additive term which converges to $0$ in probability. Formally, $(\tm, \tsig) = (m_0, \Sigma_0) + (1/n) \sum_{i=1}^n \psi(Y_i) + \epsilon$ where $\Esp \psi(Y) = 0$, $\mathrm{Var}(\psi(Y)) < +\infty$ and $\epsilon \overset{\Prb}{\llim{n}{+\infty}} 0$. However there are some estimators which cannot be written in this form straightforwardly. This is the case for instance if we test whether data follow a Gaussian with covariance of fixed rank $r$ (as in Section~\ref{sec.covranksel}). In this example, the associated estimators are  $\tm = \hat{m} = (1/n) \sum_{i=1}^n Y_i$ (empirical mean) and $\tsig = \hsig_r = \sum_{s = 1}^r \hat{\lambda}_s \hat{\Psi}_s^{\otimes 2}$ where $(\hat{\lambda}_s)_s$ and $(\hat{\Psi}_s)_s$ are the eigenvalues and eigenvectors of the empirical covariance operator $\hat{\Sigma} = (1/n) \sum_{i=1}^n (Y_i - \hat{\mu})^{\otimes 2}$.

We extend \eqref{fast.parambt2} to the general case when $\Theta_0 \neq \Theta$ and the estimators $(\tm, \tsig)$ are not the classical $(\hm, \hsig)$. We assume that the estimators $(\tm, \tsig)$ are functions of the empirical estimators $\hm$ and $\hsig$, namely there exists a continuous mapping $\TT$ such that 
\begin{align*}
(\tm, \tsig) = \TT(\hm, \hsig), \text{ where } \TT(\Theta) \subseteq \Theta_0 \text{ and }  \TT|_{\Theta_0} = \mathrm{Id}_{\Theta_0}.
\end{align*}
Under this definition, $(\tm, \tsig)$ are consistent estimators of $(m, \Sigma)$ when $(m, \Sigma) \in \Theta_0$. This kind of estimators are met for various choices of the null-hypothesis:
\begin{itemize}
\item \textbf{Unknown mean and covariance:} $(\tm, \tsig) = (\hm, \hsig)$ and $\TT$ is the identity map $\mathrm{Id}_{\Theta}$,
\item \textbf{Known mean and covariance:} $(\tm, \tsig) = (m_0, \Sigma_0)$ and $\TT$ is the constant map $\TT(m, \Sigma) = (m_0, \Sigma_0)$, 
\item \textbf{Known mean and unknown covariance:} $(\tm, \tsig) = (m_0, \hsig)$ and $\TT(m, \Sigma) = (m_0, \Sigma)$,
\item \textbf{Unknown mean and covariance of known rank $r$:} $(\tm, \tsig) = (\hm, \hsig_r)$ and $\TT(m, \Sigma) = (m, \Sigma_r)$ where $\Sigma_r$ is the rank $r$ truncation of $\Sigma$.
\end{itemize}
By introducing $\TT$, we get a similar approximation as in \eqref{stattest.approx.bt} by replacing the mapping $N: \Theta_0 \to H(\bk)$ with $N o \TT: \Theta_0 \to H(\bk)$. This leads to the bootstrap replication
\begin{align}
[n \hLMMD^2]^b_{fast} := \left\Vert \sqrt{n} \left(\hat{\bmu}_P^b - D_{(\hat{m}, \hat{\Sigma})} \NNoT{\hat{m}^{b}}{\hat{\Sigma}^{b}} \right) \right\Vert ^2_{H(\bar{k})} \ens .
\label{fast.parambt3}
\end{align} 
The validity of this bootstrap method is justified in Section \ref{ssec.theoassess.t1e}. 

Finally we define an estimator $\hat{q}_{\alpha, n}$ of $q_{\alpha, n}$ from the generated $B$ bootstrap replications $[n \hLMMD^2]^{1}_{fast} < \ldots <  [n \hLMMD^2]^{B}_{fast}$ (assuming they are sorted) 
\begin{align*}
\hat{q}_{\alpha, n} = [n \hLMMD^2]^{(\lfloor (1 - \alpha) B \rfloor)} \ens ,
\end{align*}
where $\lfloor . \rfloor$ stands for the integer part.
The rejection region is defined by
\begin{align*}
\mathcal{R}_{\alpha} = \{ n \hLMMD^2 > \hat{q}_{\alpha, n} \} \ens . 
\end{align*}

\subsubsection{Validity of the fast parametric bootstrap}
\label{ssec.theoassess.t1e}
Proposition \ref{prop.altern.parambt} hereafter shows the validity of the fast parametric bootstrap as presented in Section \ref{sssec.fast.parambt.gen}. The proof of Proposition \ref{prop.altern.parambt} is provided in Section \ref{proof.prop.altern.parambt}.

\begin{proposition}
\label{prop.altern.parambt}
Assume $\Esp_P \bk^{1/2}(Y, Y), \ens \mathrm{Tr}(\Sigma)$ and $\Esp_P ||Y - m_0||^4$ are finite. Also assume that $\TT$ is continuously differentiable on $\Theta_0$.

If $\hypot{0}$ is true, then for each $b = 1, \ldots, B$,
\begin{enumerate}
\item[(i)] $\sqrt{n} \left(\hat{\bmu}_P - \NN{\tilde{m}}{\tilde{\Sigma}}\right) \underoverset{n \to +\infty}{\mathcal{L}}{\longrightarrow} G_P - D_{(m_0, \Sigma_0)} \NNoTbis{U_P} $
\item[(ii)] $\sqrt{n} \left(\hat{\bmu}_P^b - {D}_{(\hat{m}, \hat{\Sigma})} \NNoT{\hat{m}^{b}}{ \hat{\Sigma}^{b}} \right) \underoverset{n \to +\infty}{\mathcal{L}}{\longrightarrow}  G^{'}_P - D_{(m_0, \Sigma_0)} \NNoTbis{U^{'}_P}$
\end{enumerate}
where $(G_P, U_P)$ and $(G^{'}_P, U^{'}_P)$ are $i.i.d.$ random variables in $H(\bar{k}) \times \Theta$. \\
If otherwise $\hypot{0}$ is false, $(ii)$ is still true. \\
Furthermore, $G_P$ and $U_P$ are zero-mean Gaussian r.v. with covariances 
%
\begin{align*}
\mathrm{Var}\left(G_P \right) & = \Esp_{Y \sim P} (\bar{k}(Y, .) - \bar{\mu}_P)^{\otimes 2}\\
\mathrm{Var}\left( U_P \right)  & = \Esp_{Y \sim P} \left[Y - m_0, (Y - m_0)^{\otimes 2} - \Sigma \right]^{\otimes 2}\\
\mathrm{cov}\left( G_P, U_P \right) & = \Esp_{Y \sim P} (\bar{k}(Y, .) - \bar{\mu}_P) \otimes \left[Y - m_0, (Y - m_0)^{\otimes} - \Sigma_0 \right]  \ens .
\end{align*}
\end{proposition}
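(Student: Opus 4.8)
The plan is to derive both limits from a single joint central limit theorem (CLT) in the product Hilbert space $H(\bk)\times\Theta$, and then to transport it through $N\circ\TT$ by the delta method, the bootstrap statement being obtained from its multiplier analogue. I would first introduce the influence vectors
\[
\xi_i \;=\; \Big(\bk(Y_i,\cdot)-\bmu_P,\; \big[\,Y_i-m_0,\;(Y_i-m_0)^{\otimes 2}-\Sigma_0\,\big]\Big)\;\in\;H(\bk)\times\Theta,
\]
which under $\hypot 0$ are i.i.d.\ and centered. The three finiteness hypotheses make them square-integrable in the product space, since $\mathrm{Tr}(\Sigma)=\Esp_P\|Y-m_0\|_\hsp^2$ and $\|(Y-m_0)^{\otimes2}\|_{HS(\hsp)}^2=\|Y-m_0\|_\hsp^4$ control the $\Theta$-component through $\Esp_P\|Y-m_0\|^4<\infty$, while the $H(\bk)$-component is square-integrable as soon as $\Esp_P\bk(Y,Y)<\infty$ (automatic for a bounded kernel such as the Gaussian one). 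The CLT for i.i.d.\ elements of a separable Hilbert space \cite{HoffmannJorgensen1976} then gives $\tfrac1{\sqrt n}\sum_{i=1}^n\xi_i \underoverset{n\to+\infty}{\mathcal L}{\longrightarrow}(G_P,U_P)$, a joint centred Gaussian whose covariance operator is $\Esp[\xi\otimes\xi]$; reading off its diagonal and off-diagonal blocks yields at once the three displayed formulas for $\mathrm{Var}(G_P)$, $\mathrm{Var}(U_P)$ and $\mathrm{cov}(G_P,U_P)$.

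For (i) I would linearize the plug-in. Writing $\hsig=\tfrac1n\sum_i(Y_i-m_0)^{\otimes2}-(\hm-m_0)^{\otimes2}$ shows that $\sqrt n\big((\hm,\hsig)-(m_0,\Sigma_0)\big)$ equals the $\Theta$-component of $\tfrac1{\sqrt n}\sum_i\xi_i$ up to the remainder $\sqrt n(\hm-m_0)^{\otimes2}=o_{\Prb}(1)$, so that jointly $\sqrt n\big(\hbmu-\bmu_P,\,(\hm,\hsig)-(m_0,\Sigma_0)\big)\to(G_P,U_P)$. Since $\TT$ is $C^1$ and $N$ is Fr\'echet differentiable, $N\circ\TT$ is differentiable at $(m_0,\Sigma_0)$, and $\TT|_{\Theta_0}=\mathrm{Id}_{\Theta_0}$ gives $(N\circ\TT)(m_0,\Sigma_0)=\NN{m_0}{\Sigma_0}=\bmu_P$ under $\hypot0$. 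The delta method then turns the $\Theta$-limit into $D_{(m_0,\Sigma_0)}\NNoTbis{U_P}$, and subtracting it from the $H(\bk)$-part via the continuous mapping theorem gives statement (i).

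For (ii) the engine is the multiplier CLT \cite{Burke2000}: conditionally on the data, $\tfrac1{\sqrt n}\sum_i(Z_i^b-\bar Z^b)\,\xi_i$ converges in probability to an independent Gaussian copy $(G'_P,U'_P)$ of $(G_P,U_P)$. Using $\sum_i(Z_i^b-\bar Z^b)=0$ to recenter each weighted sum for free, I would identify $\sqrt n\,\hat{\bmu}_P^b$ with the $H(\bk)$-multiplier sum and $\sqrt n\,\hm^b=\tfrac1{\sqrt n}\sum_i(Z_i^b-\bar Z^b)(Y_i-m_0)$ with the mean-multiplier sum immediately; the crux is to verify that the bootstrapped plug-in covariance $\hsig^b$ reproduces, up to $o_{\Prb}(1)$, the multiplier image of the influence term $(Y_i-m_0)^{\otimes2}-\Sigma_0$, i.e.\ that the quadratic recentering cross-terms (controlled through $\hm^b=O_{\Prb}(n^{-1/2})$) are asymptotically negligible after the $\sqrt n$ scaling. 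Finally, since $(\hm,\hsig)\to(m_0,\Sigma_0)$ and $(m,\Sigma)\mapsto D_{(m,\Sigma)}(N\circ\TT)$ is continuous ($\TT\in C^1$), I would replace $D_{(\hm,\hsig)}$ by $D_{(m_0,\Sigma_0)}$ up to $o_{\Prb}(1)$ and conclude (ii) by Slutsky. The same chain proves (ii) when $\hypot0$ fails: the bootstrap object is built only from the data-dependent $\hbmu,\hm,\hsig$ and the multipliers, so the multiplier CLT and the delta method still apply, now linearizing around the true (possibly non-null) limit of $(\hm,\hsig)$.

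The main obstacle is the bootstrap step. Beyond invoking a multiplier CLT valid in an infinite-dimensional Hilbert space (rather than the familiar univariate or $\Reals^d$ versions), one must carefully control the two plug-in substitutions: the quadratic recentering hidden in $\hsig^b$, and the replacement of $D_{(m_0,\Sigma_0)}(N\circ\TT)$ by its empirical version $D_{(\hm,\hsig)}(N\circ\TT)$. Both reduce to showing that a collection of remainder terms are $o_{\Prb}(1)$ uniformly enough to survive the $\sqrt n$ scaling, which is precisely where the fourth-moment assumption $\Esp_P\|Y-m_0\|^4<\infty$ and the continuous differentiability of $\TT$ are essential; by contrast, the Gaussian CLT and the algebraic identification of the three covariances are comparatively routine.
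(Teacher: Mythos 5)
Your proposal is correct and follows essentially the same route as the paper: a Hilbert-space CLT for the joint influence vectors in $H(\bk)\times\Theta$, a Fr\'echet-derivative linearization of $N\circ\TT$ for part (i), and a weighted-bootstrap analogue with control of the plug-in remainders (the recentering inside $\hat{\Sigma}^b$ and the replacement of $D_{(\hat{m},\hat{\Sigma})}$ by $D_{(m_0,\Sigma_0)}$) for part (ii). The only cosmetic difference is that you invoke a conditional multiplier CLT, whereas the paper applies the same unconditional Hilbert-space CLT to the multiplier sums and deduces independence of $(G_P,U_P)$ and $(G'_P,U'_P)$ from vanishing cross-covariances; the conclusions coincide.
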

By the Continous Mapping Theorem and the continuity of $\Vert . \Vert_{H(\bk)}^2$, Proposition \ref{prop.altern.parambt} guarantees that the estimated quantile converges almost surely to the true one as $n, B \to +\infty$, so that the type-I error equals $\alpha$ asymptotically. 

Note that in \cite{Kojadinovic2012} the parameter subspace $\Theta_0$ must be a subset of $\R^p$ for some integer $p \geq 1$. Proposition~\ref{prop.altern.parambt} allows $\Theta_0$ to be a subset of a possibly infinite-dimensional Hilbert space ($m$ belongs to $\hsp$ and $\Sigma$ belongs to the space of finite trace operators $\hsp \to \hsp$). 

Figure~\ref{fast.parambt.perf} (left plot) compares empirically the bootstrap distribution of $[n \hLMMD^2]^b_{fast}$ and the distribution of $n \hLMMD^2$. When $n = 1000$, the two corresponding densities are superimposed and a two-sample Kolmogorov-Smirnov test returns a p-value of $0.978$ which confirms the strong similarity between the two distributions. Therefore the fast bootstrap method seems to provide a very good approximation of the distribution of $n \hLMMD^2$ even for a moderate sample size $n$.
\begin{figure}[t]
\vskip 0.2in
\begin{center}
\centerline{
\includegraphics[width=0.8\textwidth]{./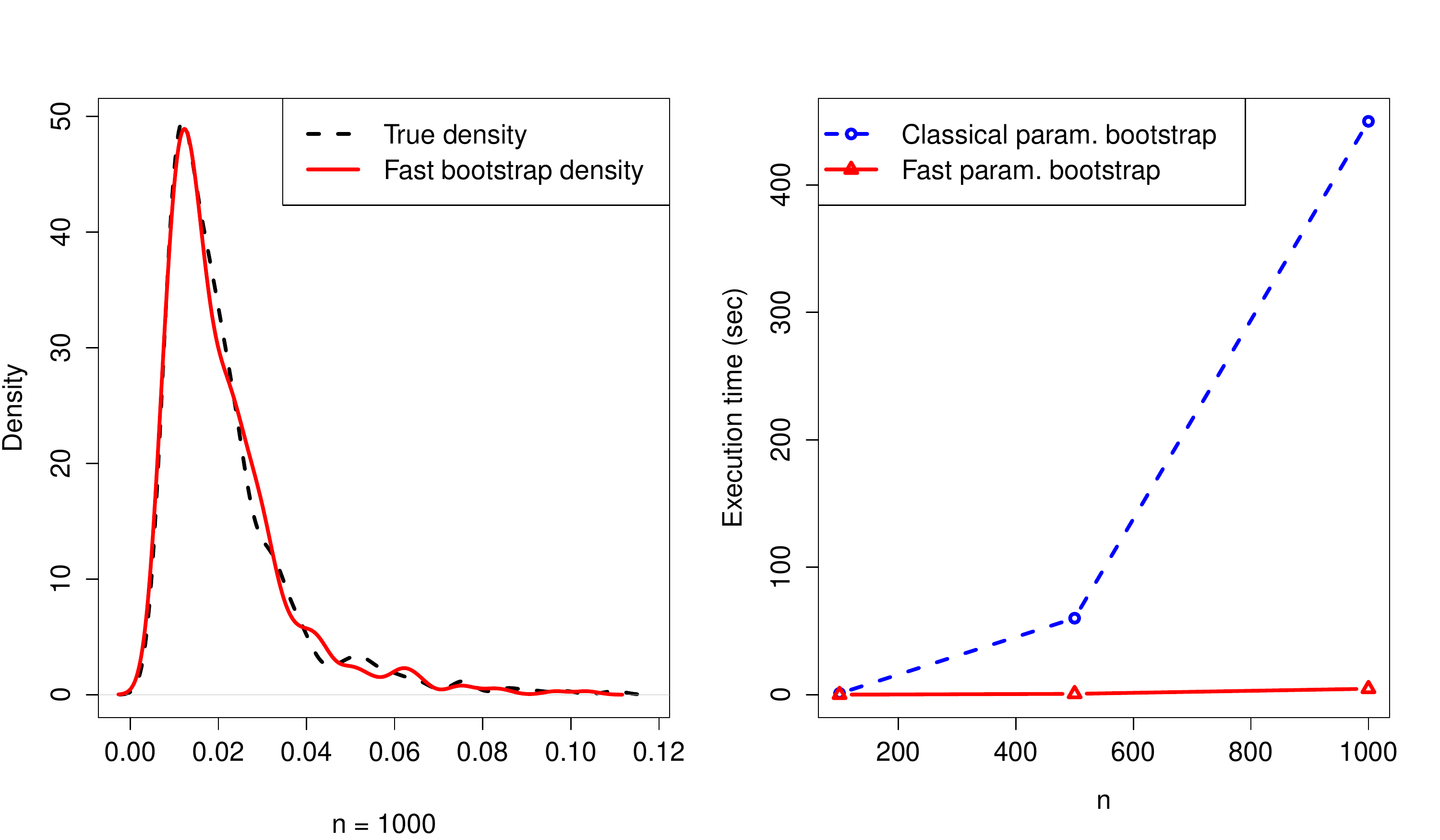}
}
\caption{\textbf{Left:} Comparison of the distributions of $n \hLMMD^2$ (test statistic) and $[n \hLMMD^2]^b_{fast}$ (fast bootstrap replication) when $n = 1000$. A Kolmogorov-Smirnov two-sample test applied to our simulations returns a p-value of $0.978$ which confirms the apparent similarity between the two distributions. \textbf{Right: } Comparison of the execution time (in seconds) of both classical and fast bootstrap methods.} 
\label{fast.parambt.perf}
\end{center}
\vskip -0.2in
\end{figure}
\section{Test performances}
\label{sec.theoretical.assessment}

\subsection{An upper bound for the Type-II error}
\label{ssec.theoassess.t2e}
Let us assume the null-hypothesis is false, that is $P \neq \mathcal{N}(m_0, \Sigma_0)$ or $(m_0, \Sigma_0) \notin \Theta_0$. Theorem~\ref{PropTypeIIerr} gives the magnitude of the Type-II error, that is the probability of wrongly accepting $\hypot{0}$. The proof can be found in Appendix \ref{appendix.proof.prop.t2e}.

Before stating Theorem~\ref{PropTypeIIerr}, let us introduce or recall useful notation :
\begin{itemize}
\item $\Delta = \left\Vert \bar{\mu}_{P} - \NNoT{m_0}{ \Sigma_0}] \right\Vert_{H(\bar{k})} $,
\item $q_{\alpha, n} = \Esp \hat{q}_{\alpha, n} $ ,
\item $m^{2}_P = \Esp_P ||D_{(m_0, \Sigma_0)} \NNoTbis{\Psi(Y)} - \bk(Y, .) + \bar{\mu}_P ||_{H(\bar{k})}^2$,
\end{itemize}
where $\Psi(Y) = (Y - m_0, [Y - m_0]^{\otimes 2} - \Sigma_0)$ and $D_{(m_0, \Sigma_0)} (N o \TT)$ denotes the Fr\'echet derivative of $N o \TT$ at $(m_0, \Sigma_0)$. According to Proposition \ref{prop.altern.parambt} and the continuous mapping theorem, $\hat{q}_{\alpha, n}$ corresponds to an order statistic of a random variable which converges weakly to $\left\Vert G^{'}_P - D_{(m_0, \Sigma_0)} \NNoTbis{U^{'}_P}  \right\Vert^2$ (as defined in Proposition \ref{prop.altern.parambt}). Therefore, its mean $q_{\alpha, n}$ tends to a finite quantity as $n \to +\infty$. $L$ and $m_P^2$ do not depend on $n$ as well.
%
%

%
%
%
\begin{theorem}{(Type II error)}
\label{PropTypeIIerr}
Assume $\sup_{x, y \in \hsp_0} |\bar{k}(x, y)| = M < + \infty$ where $Y \in \hsp_0 \subseteq \hsp$ $P$-almost surely and $\hat{q}_{\alpha, n}$ is independent of $n \hLMMD^2$. \\
Then, for any $n > q_{\alpha, n} \Delta^{-2}$
\begin{align}
\label{TypeIIerrBound}
\Prb\left( n \hLMMD^2 \leq \hat{q}_{\alpha, n} \right) \leq \exp\left( - \frac{n \left(\Delta - \displaystyle\frac{q_{\alpha, n}}{n \Delta} \right)^2}{2 m_P^2 + C m_P M^{1/2}(\Delta^2 - q_{\alpha, n}/n) }    \right) f(\alpha, B, M, \Delta) \ens ,
\end{align}
where
\begin{align}
f(\alpha, B, M, \Delta) & = (1+o_n(1)) \left(1 + \frac{C_{P^b}}{C^{'} \Delta^2 M^{1/2} m_P \sqrt{\alpha B}} + \frac{o_B(B^{-1/2})}{C^{''} \Delta^4 M m_P^2} \right) \notag \ens ,
\end{align}
and $C, C^{'}, C^{''}$ are absolute constants and $C_{P^b}$ only depends on the distribution of $[n \hLMMD^2]^b_{fast}$.
\end{theorem}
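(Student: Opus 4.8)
The plan is to turn the acceptance event $\{n\hLMMD^2 \leq \hat q_{\alpha,n}\}$ into a one-sided large-deviation event for a centered average of i.i.d.\ terms, apply a Bernstein inequality, and finally average over the random critical value to produce $f$. Write $v = \bmu_P - \NNoT{m_0}{\Sigma_0}$, so that $\Delta = \Vert v\Vert_{H(\bk)}$, and $V = \hbmu - \NN{\tm}{\tsig} = \hbmu - \NNoT{\hm}{\hsig}$, so that $\hLMMD^2 = \Vert V\Vert_{H(\bk)}^2$. Expanding the square around $v$ gives
\[
\hLMMD^2 = \Delta^2 + 2\langle v, V-v\rangle_{H(\bk)} + \Vert V-v\Vert_{H(\bk)}^2 \geq \Delta^2 + 2\langle v, V-v\rangle_{H(\bk)},
\]
discarding a nonnegative term. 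Since $\hat q_{\alpha,n}$ is assumed independent of $n\hLMMD^2$, I would first condition on it; the event $\{n\hLMMD^2 \leq \hat q_{\alpha,n}\}$ then forces $\langle v, -(V-v)\rangle_{H(\bk)} \geq \tfrac12(\Delta^2 - \hat q_{\alpha,n}/n)$, a positive threshold precisely because $n > q_{\alpha,n}\Delta^{-2}$. Dividing by $\Delta$ exposes the quantity $\Delta - \hat q_{\alpha,n}/(n\Delta) = (\Delta^2 - \hat q_{\alpha,n}/n)/\Delta$ appearing in the exponent.

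The second step realizes $V - v$ as an i.i.d.\ average. Under $\hypot{1}$ the estimators $(\hm,\hsig)$ converge to the true moments, whose image under $\TT$ is the point denoted $(m_0,\Sigma_0)$; linearizing $N o \TT$ by its Fréchet derivative there (as in \eqref{stattest.approx.bt}) and using $(\hm - m_0, \hsig - \Sigma_0) \simeq \tfrac1n\sum\Psi(Y_i)$ yields
\[
V - v \simeq \frac1n\sum_{i=1}^n W_i, \qquad W_i = \bk(Y_i, \cdot) - \bmu_P - D_{(m_0,\Sigma_0)}\NNoTbis{\Psi(Y_i)},
\]
so that $\Esp\Vert W\Vert_{H(\bk)}^2 = m_P^2$ by definition. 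The i.i.d.\ variables $\langle v, -W_i\rangle_{H(\bk)}$ then have second moment at most $\Delta^2 m_P^2$, while $\sup_{x,y}|\bk(x,y)| = M$ gives $\Vert\bk(Y_i,\cdot)\Vert_{H(\bk)} = \bk(Y_i,Y_i)^{1/2}\leq M^{1/2}$, hence a uniform bound of order $\Delta\,M^{1/2}$ on $|\langle v, -W_i\rangle_{H(\bk)}|$. Feeding this variance proxy and range bound into Bernstein's inequality at the threshold $\tfrac12(\Delta^2 - \hat q_{\alpha,n}/n)$ gives, conditionally on $\hat q_{\alpha,n}$, a bound $\exp(-g(\hat q_{\alpha,n}))$ whose exponent has the claimed shape, the variance contributing the $m_P^2$ term and the range the $m_P M^{1/2}$ term of the denominator (tracking the absolute constants $C,C',C''$ is routine bookkeeping).

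The last step removes the randomness of $\hat q_{\alpha,n}$. Taking the expectation over $\hat q_{\alpha,n}$ and Taylor-expanding $\hat q \mapsto \exp(-g(\hat q))$ around $q_{\alpha,n} = \Esp\hat q_{\alpha,n}$, the first-order term cancels by the very definition $q_{\alpha,n} = \Esp\hat q_{\alpha,n}$, leaving $\exp(-g(q_{\alpha,n}))$ times a correction governed by the dispersion of $\hat q_{\alpha,n}$. Since $\hat q_{\alpha,n}$ is the $\lfloor(1-\alpha)B\rfloor$ order statistic of $B$ (asymptotically i.i.d.) replications $[n\hLMMD^2]^b_{fast}$, a Bahadur-type representation controls its fluctuations at scale $(\alpha B)^{-1/2}$ with a constant $C_{P^b}$ set by the density of the bootstrap law at the quantile; assembling these contributions reproduces $f(\alpha,B,M,\Delta)$, with its $(\alpha B)^{-1/2}$ leading term and $o_B(B^{-1/2})$ remainder.

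The main obstacle is twofold and lies away from the clean Bernstein core. First, the linearization $V - v \simeq \tfrac1n\sum W_i$ must be made rigorous with a remainder negligible against the deviation threshold; this is delicate under $\hypot{1}$, where $(\hm,\hsig)$ no longer tends to $(m_0,\Sigma_0)$, so one must show that the quadratic Fréchet remainder and any centering bias from evaluating $\Psi$ at $(m_0,\Sigma_0)$ rather than at the true moments are absorbed into the $(1+o_n(1))$ factor, using the finiteness of $\Esp\Vert Y-m_0\Vert^4$ and $\mathrm{Tr}(\Sigma)$. Second, and harder, is the precise control of $\Esp\exp(-g(\hat q_{\alpha,n}))$: because $g$ is of order $n$, even small fluctuations of $\hat q_{\alpha,n}$ must be handled carefully, and identifying the exact $\sqrt{\alpha B}$ scaling of $f$ together with the constant $C_{P^b}$ requires the quantile's asymptotic representation plus uniform integrability of the conditional bound.
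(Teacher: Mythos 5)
Your overall architecture coincides with the paper's: the same decomposition $\hLMMD^2 = \Delta^2 + 2\langle v, V-v\rangle + \Vert V-v\Vert^2$ with the nonnegative quadratic term discarded (the paper phrases it as dropping a degenerate U-statistic $\hDn_0^2 \geq 0$), the same linearization of $V-v$ into an i.i.d.\ average of $\Xi(Y_i) = D_{\theta_0}(N o \TT)(\Psi(Y_i)) - \bk(Y_i,\cdot) + \bmu_P$, the same variance proxy $\Delta^2 m_P^2$ and range bound of order $\Delta M^{1/2}$ fed into a Bennett/Bernstein inequality conditionally on $\hat q_{\alpha,n}$, and the same final averaging over the bootstrap quantile using its order-statistic variance bound $\mathrm{Var}(\hat q_{\alpha,n}) \leq C_{P^b}/(\alpha B)$.

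The one genuine gap is in your last step, and it is not merely a technicality: you claim that after Taylor-expanding $\hat q \mapsto \exp(-g(\hat q))$ around $q_{\alpha,n} = \Esp\,\hat q_{\alpha,n}$ ``the first-order term cancels,'' yet you also claim the leading correction in $f$ is of order $(\alpha B)^{-1/2}$. These two statements are incompatible: if the first-order term truly cancelled, the residual correction would be governed by $\Esp(\hat q_{\alpha,n}-q_{\alpha,n})^2 = \mathrm{Var}(\hat q_{\alpha,n}) \sim C_{P^b}/(\alpha B)$, i.e.\ a $(\alpha B)^{-1}$ term, and moreover controlling the second-order remainder would require bounding the second derivative of $\exp(-g)$ (which scales like $n^2$) at a random intermediate point, reintroducing $n$-dependence that the stated $f$ does not carry. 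The paper avoids this by \emph{not} cancelling anything: it uses the exact first-order Taylor--Lagrange form $h(\hat s) = h(s) + h'(\tilde s)(\hat s - s)$ with $\tilde s$ between $s$ and $\hat s$, bounds $|h'(\tilde s)|$ uniformly by $\tfrac{3n}{2\overline M \vartheta}$ times a controllable exponential factor, and then applies Cauchy--Schwarz so that the correction is driven by $\bigl(\Esp(\hat s - s)^2\bigr)^{1/2} = n^{-1}\bigl(\mathrm{Var}(\hat q_{\alpha,n})\bigr)^{1/2} \leq n^{-1}\sqrt{C_{P^b}/(\alpha B)}$ --- which is exactly where the $(\alpha B)^{-1/2}$ leading term of $f$ comes from. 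You should replace the cancellation argument by this Lagrange-form-plus-Cauchy--Schwarz step; as written, your mechanism does not produce the stated bound.
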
 
The first implication of Proposition \eqref{PropTypeIIerr} is that our test is consistent, that is
\begin{align*}
\Prb ( n \hLMMD^2 \leq \hat{q}_{\alpha, n} \mid \hypot{0} \text{ false}) \underset{n \to +\infty}{\longrightarrow} 0 \ens .
\end{align*}  

Furthermore, the upper bound in \eqref{TypeIIerrBound} reflects the expected behaviour of the Type-II error with respect to meaningful quantities. When $\Delta$ decreases, the bound increases (alternative more difficult to detect). When $\alpha$ (Type-I error) decreases, $q_{\alpha, n}$ gets larger and $n$ has to be larger to get the bound. The variance term $m_P^2$ encompasses the difficulty of estimating $\bar{\mu}_P$ and of estimating the parameters as well. In the special case when $m$ and $\Sigma$ are known, $\TT = Id$ and the chain rule yields $D_{(m_0, \Sigma_0)} (N o \TT) = (D_{\TT(m_0, \Sigma_0)} N )o (D_{(m_0, \Sigma_0)} \TT) = 0$ so that $m_P^2 = \Esp ||\bar{\phi}(Y) - \bar{\mu}_P||^2$ reduces to the variance of $\hat{\bmu}_P$. As expected, a large $m_P^2$ makes the bound larger.
Note that the estimation of the critical value which is related to the term $f(\alpha, B, M, \Delta)$ in \eqref{TypeIIerrBound} does not alter the asymptotic rate of convergence of the bound.

Remark that assuming that $\hat{q}_{\alpha, n}$ is independent of $n \hLMMD^2$ is reasonable for a large $n$, since $n \hLMMD^2$ and $\hat{q}_{\alpha, n}$ are asymptotically independent according to Proposition \ref{prop.altern.parambt}.

\label{sec.experimental.results}

\subsection{Empirical study of type-I/II errors}\label{ssc.t12e.study}

\begin{figure}
\vskip 0.2in
\begin{center}
\centerline{
\includegraphics[width=0.75\textwidth]{./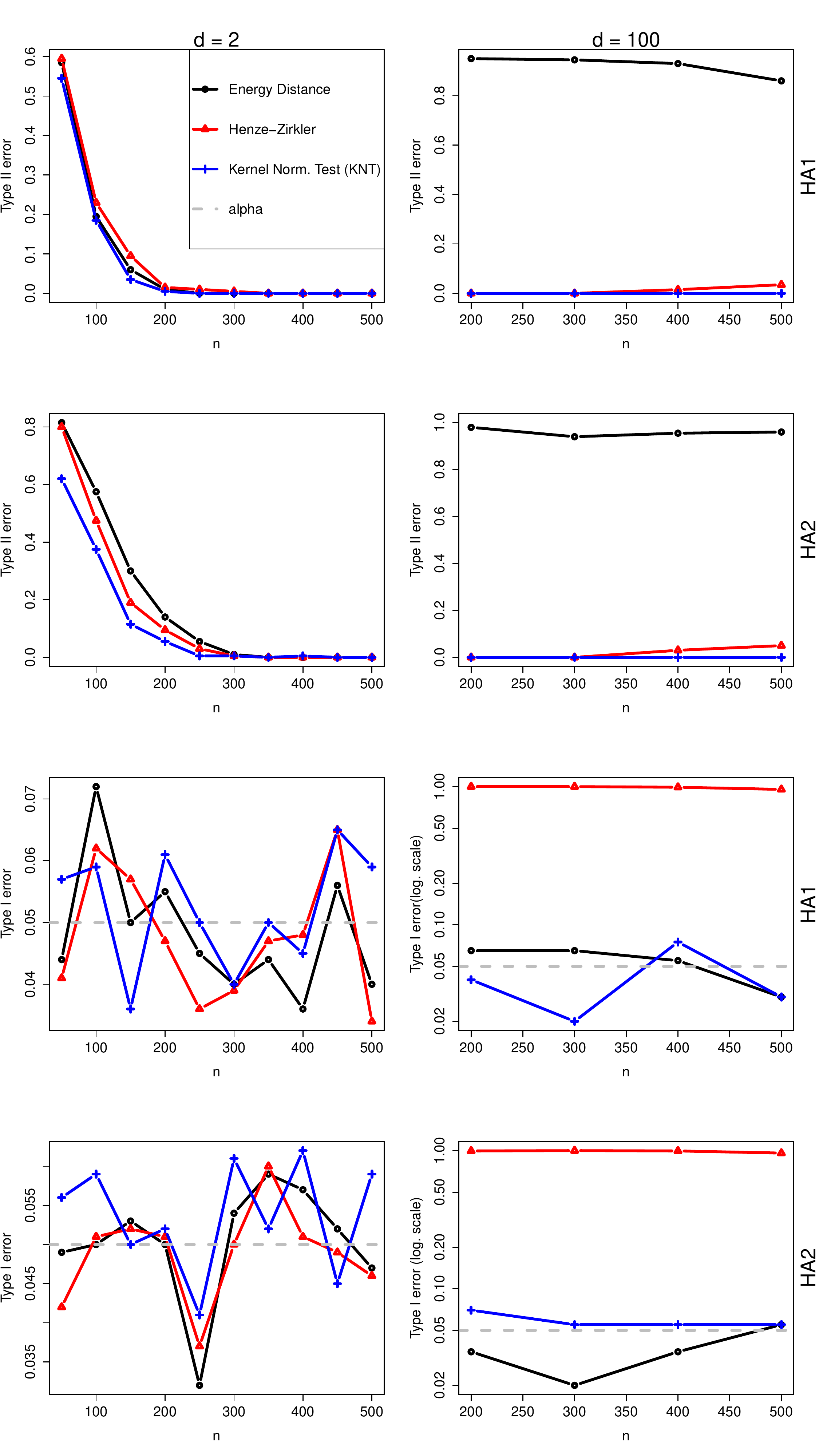}
}
\caption{Type-I and type-II errors of KNT ($+$ blue), Energy Distance ($\circ$ black), and Henze-Zirkler ($\bigtriangleup$ red). Two alternative distributions are considered: HA1 (rows $1$ and $3$) and HA2 (rows $2$ and $4$). Two settings are considered: $d=2$ (left) and $d = 100$ (right).}
\label{T2E_FinDim}
\end{center}
\vskip -0.2in
\end{figure}

Empirical performances of our test are inferred on the basis of synthetic data. For the sake of brevity, our test is referred to as KNT (Kernel Normality Test) in the following.

One main concern of goodness-of-fit tests is their drastic loss of power as dimensionality increases. 
Empirical evidences (see Table 3 in \cite{SzekelyRizzoENormTest}) prove ongoing multivariate normality tests suffer such deficiencies.
The purpose of the present section is to check if KNT displays a good behavior in high or infinite dimension.

%

\subsubsection{Finite-dimensional case (Synthetic data)}
\label{subsubsec.finite.dimension}
\textbf{Reference tests.} The power of our test is compared with that of two multivariate normality tests: the Henze-Zirkler test (HZ) \cite{HenzeZirkler} and the energy distance (ED) test  \cite{SzekelyRizzoENormTest}. 
The main idea of these tests is briefly recalled in Appendix \ref{appendix.mvn.tests.hz} and \ref{appendix.mvn.tests.ed}.

\textbf{Null and alternative distributions.} 
Two alternatives are considered: a mixture of two Gaussians with different means ($\mu_1 = 0$ and $\mu_2 = 1.5 \enspace (1, 1/2, \ldots, 1/d)$) and same covariance $\Sigma = 0.5 \enspace \mathrm{diag}(1, 1/4, \ldots, 1/d^2)$, whose mixture proportions equals either $(0.5, 0.5)$ (alternative HA1) or $(0.8, 0.2)$ (alternative HA2). Furthermore, two different cases for $d$ are considered: $d=2$ (small dimension) and $d=100$ (large dimension).

\textbf{Simulation design.} $200$ simulations are performed for each test, each alternative and each $n$ (ranging from $100$ to $500$). $B$ is set at $B = 250$ for KNT. The test level is set at $\alpha = 0.05$ for all tests. 

\textbf{Results.} 
%
In the small dimension case (Figure \ref{T2E_FinDim}, left column), the actual Type-I error of all tests remain more or less around $\alpha$ ($\pm 0.02$). Their Type-II errors are superimposed and quickly decrease down to $0$ when $n \geq 200$.
On the other hand, experimental results reveal different behaviors as $d$ increases (Figure \ref{T2E_FinDim}, right column). 
Whereas ED test lose power, KNT and HZ still exhibits small Type-II error values. 
Besides, ED and KNT Type-I errors remain around the prescribed level $\alpha$ while that of HZ is close to $1$, which shows that its small Type-II error is artificial.
This seems to confirm that HZ and ED tests are not suited to high-dimensional settings unlike KNT.
\subsubsection{Infinite-dimensional case (real data)}
\label{RealDataPowerStudy}
\begin{figure}
\vskip 0.2in
\begin{center}
\centerline{
\includegraphics[width=0.65\textheight]{./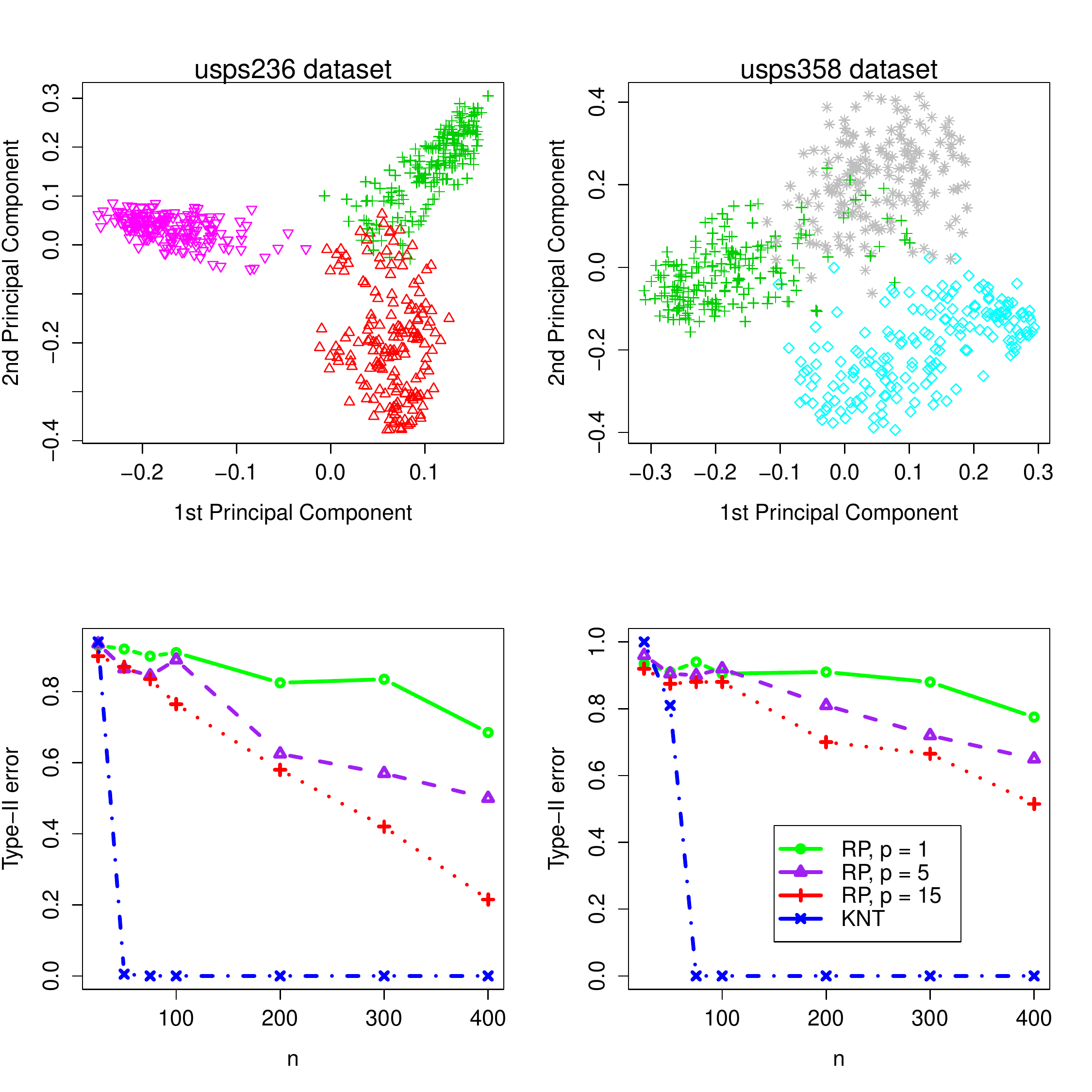}
}
\caption{3D-Visualization (Kernel PCA) of the "Usps236" (top row, left) and "Usps358" (top row, right) datasets; comparison of Type-II error (bottom row, left: "Usps236", right: "Usps358") for: KNT ($\times$ blue) and Random Projection with $p = 1$ ($\bullet$ green), $p = 5$ ($\bigtriangleup$ purple) and $p = 15$ ($+$ red) random projections. \label{RealD}}
\end{center}
\vskip -0.2in
\end{figure}
\textbf{Dataset and chosen kernel.} Let us consider the USPS dataset which consists of handwritten digits represented by a vectorized $8 \times 8$ greyscale matrix ($\mathcal{X} = \mathbb{R}^{64}$).
A Gaussian kernel $k_G(\cdot,\cdot) = \exp(-\sigma^2 ||\cdot\,-\,\cdot ||^2)$ is used with $\sigma^2 = 10^{-4}$.
%
%
%
Comparing sub-datasets "Usps236" (keeping the three classes "$2$", "$3$" and "$6$", $541$ observations) and "Usps358" (classes "3", "5" and "8", $539$ observations), the 3D-visualization (Figure \ref{RealD}, top panels) suggests
three well-separated Gaussian components for ``Usps236'' (left panel), and more overlapping classes for ``Usps358'' (right panel).

\textbf{References tests.} KNT is compared with Random Projection (RP) test, specially designed for infinite-dimensional settings. RP is presented in Appendix \ref{appendix.mvn.tests.rp}. Several numbers of projections $p$ are considered for the RP test : $p = 1, 5 $ and $15$.

\textbf{Simulation design.} We set $\alpha = 0.05$ and $200$ repetitions have been done for each sample size.  

\textbf{Results.} (Figure~\ref{RealD}, bottom plots) 
RP is by far less powerful KNT in both cases, no matter how many random projections $p$ are considered. Indeed, KNT exhibits a Type-II error near $0$ when $n$ is barely equal to $100$, whereas RP still has a relatively large Type-II error when $n = 400$. 
On the other hand, RP becomes more powerful as $p$ gets larger as expected. A large enough number of random projections may allow RP to catch up KNT in terms of power. But RP has a computational advantage over KNT only when $p = 1$ where the RP test statistic is distribution-free. This is no longer the case when $p \geq 2$ and the critical value for the RP test is only available through Monte-Carlo methods.  
%
%

%
%
%

%
%
\section{Application to covariance rank selection}
\label{sec.covranksel}
\subsection{Covariance rank selection through sequential testing}
\label{ssec.covrankestim}
Under the Gaussian assumption, the null hypothesis becomes 
\begin{align*}
\hypot{0}: (m_0, \Sigma_0) \in \Theta_0 \ens ,
\end{align*}
and our test reduces to a test on parameters.

We focus on the estimation of the rank of the covariance operator $\Sigma$. Namely, we consider a collection of models $(\mathcal{M}_r)_{1 \leq r \leq r_{max}}$ such that, for each $r = 1, \ldots, r_{max}$,
\begin{align*}
\mathcal{M}_r = \left\{ P = \mathcal{N}(m, \Sigma_r) \mid   m \in H(k) \text{ and } \mathrm{rk}(\Sigma_r) = r \right\} \ens .
\end{align*} 
Each of these models correspond respectively to the following null hypotheses
\begin{align*}
H_{0, r} : \mathrm{rank}(\Sigma) = r, \ens r = 1, \ldots, r_{max} \ens ,
\end{align*} 
and the corresponding tests can be used to select the most reliable model. These tests are performed in a sequential procedure summarized in Algorithm~\ref{algo.seqtest}.
This sequential procedure yields an estimator $\hat{r}$ defined as
\begin{align*}
\hat{r} \overset{\Delta}{=} \underset{\tilde{r}}{\mathrm{min}} \left\{ H_{0, r} \text{ rejected for } r = 1, \ldots, \tilde{r}-1 \text{ and } H_{0, \tilde{r}} \text{ accepted}  \right\} \ens .
\end{align*}
or $\hat{r} \overset{\Delta}{=} r_{max}$ if all of the hypotheses are rejected.

Sequential testing to estimate the rank of a covariance matrix (or more generally a noisy matrix) is mentionned in \cite{Ratsimalahelo_2003} and \cite{RobinSmith_2000}. Both of these papers focus on the probability to select a wrong rank, that is $\mathbb{P}(\hat{r} \neq r^*)$ where $r^*$ denotes the true rank. The goal is to choose a level of confidence $\alpha$ such that this probability of error converges almost surely to $0$ when $n \to +\infty$. 

There are two ways of guessing a wrong rank : either by overestimation or by underestimation. Getting $\hat{r}$ greater than $r^*$  implies that the null-hypothesis $H_{0, r^*}$ was tested and wrongly rejected, hence a probability of overestimating $r^*$ at most equal to $\alpha$. Underestimating means that at least one of the false null-hypothesis $H_{0, 1}, \ldots, H_{0, r^* - 1}$ was wrongly accepted (Type-II error). Let $\beta_r(\alpha)$ denote the Type-II error of testing $H_{0, r}$ with confidence level $\alpha$ for each $r < r^*$. Thus by a union bound argument, 
\begin{align}
\mathbb{P}(\hat{r} \neq r^*) \leq \sum_{r = 1}^{r^* - 1} \beta_r(\alpha) + \alpha \ens .
\label{bound.prob.wrongrk}
\end{align}
The bound in \eqref{bound.prob.wrongrk} decreases to $0$ only if $\alpha$ converges to $0$ but at a slow rate. Indeed, the Type-II errors $\beta_r(\alpha)$ grow with decreasing $\alpha$ but converge to zero when $n \to +\infty$. For instance in the case of the sequential tests mentionned in \cite{Ratsimalahelo_2003} and \cite{RobinSmith_2000}, the correct rate of decrease for $\alpha$ must satisfy $(1/n) \log(1/\alpha) = o_n(1)$.  
%
%
%
%
%
\begin{algorithm}[t]
\caption{Sequential selection of covariance rank}
\label{algo.seqtest}
\begin{algorithmic}
\normalsize
\Require{Gram matrix $K = [\bk(Y_i, Y_j)]_{i,j}$, confidence level $0 < \alpha < 1$}
\begin{enumerate}
\item[1.] Set $r = 1$ and test $H_{0, r}$
\item[2.] If $H_{0, r}$ is rejected and $r < r_{max}$, set $r = r+1$ and return to $1.$
\item[3.] Otherwise, set the estimator of the rank $\hat{r} = r$.
\end{enumerate}
\Return{estimated rank $\hat{r}$ }
\end{algorithmic}
\end{algorithm}
%
%
%
%
\begin{figure}[t]
\vskip 0.2in
\begin{center}
\centerline{
\includegraphics[width=0.6 \columnwidth]{./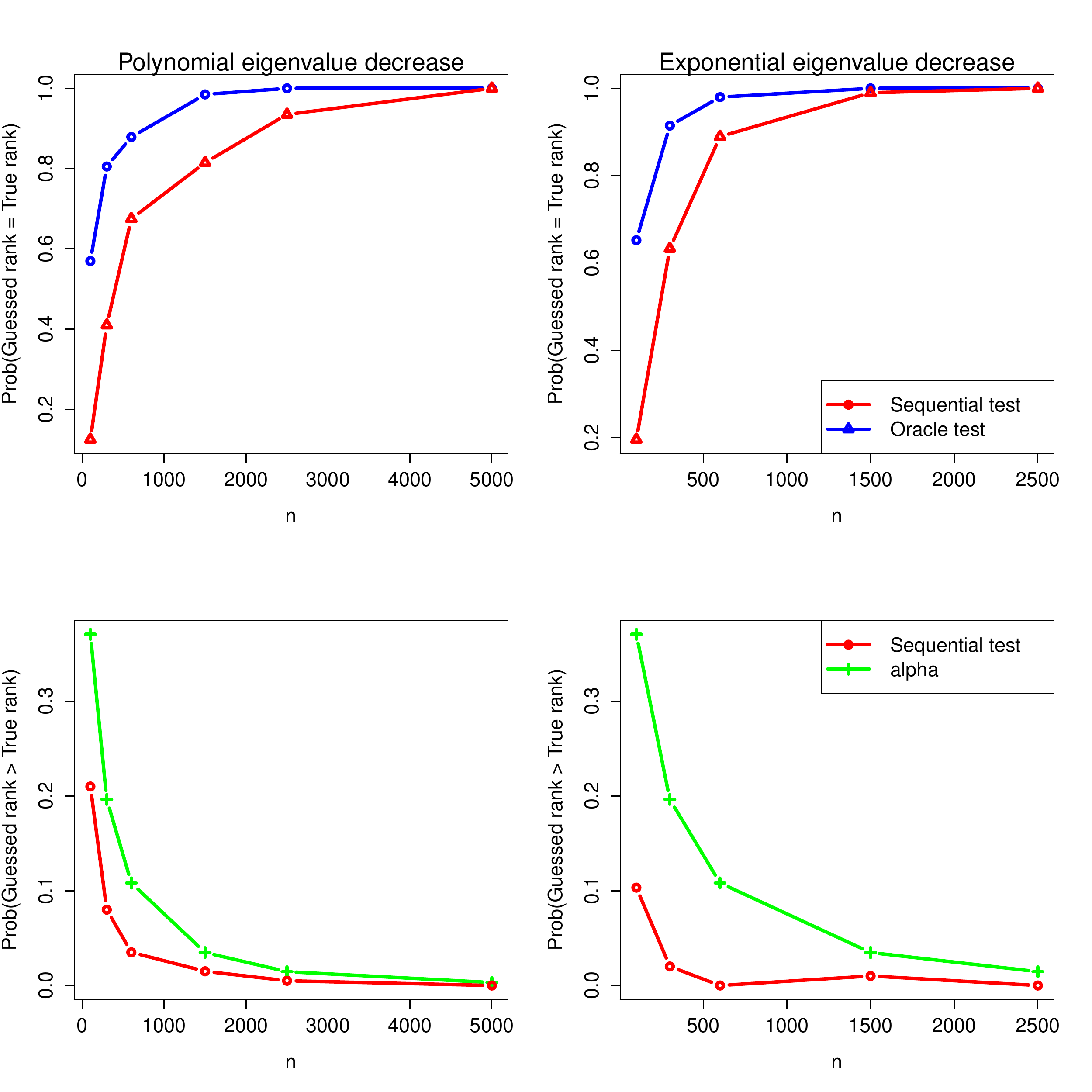}
}
\caption{\textbf{Top half:} Probabilities of finding the right rank with respect to $n$ for our sequential test ($\bullet$ red) and the oracle procedure ($\bigtriangleup$ blue); \textbf{bottom half:} probabilities of overestimating the true rank with the sequential procedure compared with fixed alpha ($+$ green). In each case, two decreasing rate for covariance eigenvalues are considered : polynomial (left column) and exponential (right column).  
\label{seqtest.probs}}
\end{center}
\vskip -0.2in
\end{figure} 
\subsection{Empirical performances}
\label{ssc.covranksel} 
In this section, the sequential procedure to select covariance rank (as presented in Section \ref{ssec.covrankestim}) is tested empirically on synthetic data.

\textbf{Dataset} A sample of $n$ zero-mean Gaussian with covariance $\Sigma_{r^*}$ are generated, where $n$ ranges from $100$ to $5000$. $\Sigma_{r^*}$ is of rank $r^{*} = 10$ and its eigenvalues decrease either polynomially ($\lambda_r = r^{-1}$ for all $r \leq r^{*}$) or exponentially ($\lambda_r = \exp(-0.2 r)$ for all $r \leq r^{*}$).

\textbf{Benchmark} To illustrate the level of difficulty, we compare our procedure with an oracle procedure which uses the knowledge of the true rank. Namely, the oracle procedure follows our sequential procedure at a level $\alpha_{oracle}$  defined as follows
\begin{align*}
\alpha_{oracle} = \underset{1 \leq r \leq r^{*} - 1}{\mathrm{max}}  \Prb_Z(n \hLMMD^2_r \leq Z_r) \ens ,
\end{align*}
where $n \hLMMD^2_r$ is the observed statistic for the $r$-th test and $Z_r$ follows the distribution of this statistic under $H_{0, r}$.
Hence $\alpha_{oracle}$ is chosen such that the true rank $r^{*}$ is selected whenever it is possible. 

\textbf{Simulation design} To get a consistent estimation of $r^{*}$, the confidence level $\alpha$ must decrease with $n$ and is set at $\alpha = \alpha_n = \exp(-0.125 n^{0.45})$. Each time, $200$ simulations are performed.

\textbf{Results} The top panels of Figure \ref{seqtest.probs} display the proportion of cases when the target rank is found, either for our sequential procedure or the oracle one. When the eigenvalues decay polynomially, the oracle shows that the target rank cannot be almost surely guessed until $n = 1500$. When $n \leq 1500$, our procedure finds the true rank with probability at least $0.8$ and quickly catches up to the oracle as $n$ grows. In the exponential decay case, a similar observation is made. This case seems to be easier, as our procedure performs almost as well as the oracle when $n \geq 600$. In all cases, the consistency of our procedure is confirmed by the simulations.

The bottom panels of Figure \ref{seqtest.probs} compare $\alpha$ with the probability of overestimating $r^{*}$ (denoted by $p_+$). As noticed in Section \ref{ssec.covrankestim}, the former is an upper bound of the latter. But we must check empirically whether the gap between those two quantities is not too large, otherwise the sequential procedure would be too conservative and lead to excessive underestimation of $r^{*}$. In the polynomial decay case, the difference between $\alpha$ and $p_+$ is small, even when $n=100$. The gap is larger in the exponential case but gets broader when $n \leq 1500$.

\subsection{Robustness analysis}
In practice, none of the models $\mathcal{M}_r$ is true. An additive full-rank noise term is often considered in the literature \citep{Choi2014, Josse2012}. Namely, we set in our case
\begin{align}
Y = Z + \epsilon
\label{noisy.model}
\end{align} 
where $Z \sim \mathcal{N}(m, \Sigma_{r^{*}})$ with $\mathrm{rk}(\Sigma_{r^{*}}) = r^{*}$ and $\epsilon$ is the error term independent of $Z$. Note that the Gaussian assumption concerns the main signal $Z$ and not the error term whereas usual models assume the converse \citep{Choi2014, Josse2012}. 

Figure \ref{seqtest.robustness} illustrates the performance of our sequential procedure under the noisy model \eqref{noisy.model}. We set $\hsp = \Reals^{100}$, $n= 600$, $r^{*} = 3$ and $\Sigma_{r^{*}} = \Sigma_3 = \mathrm{diag}(\lambda_1, \ldots, \lambda_3, 0, \ldots, 0)$ where $\lambda_r = \exp( - 0.2 r)$ for $r \leq 3$. The noise term is $\epsilon = (\lambda_{3} \rho^{-1} \eta_i)_{1 \leq i \leq 100}$ where $\eta_1, \ldots, \eta_{100}$ are \iid Student random variables with $10$ degrees of freedom and $\rho > 0$ is the \textit{signal-to-noise ratio}. 

As expected, the probability of guessing the target rank $r^{*}$ decreases down to $0$ as the signal-to-noise ratio $\rho$ diminishes. However, choosing a smaller level of confidence $\alpha$ allows to improve the probability of right guesses for a fixed $\rho$. without sacrificing much for smaller signal-to-noise ratios. This is due to the fact that each null-hypothesis $H_{0, r}$ is false, hence the need for a smaller $\alpha$ (smaller Type-I error) which yields greater Type-II errors and avoids the rejection of all of the null-hypotheses. 
\begin{figure}[t]
\vskip 0.2in
\begin{center}
\centerline{
\includegraphics[width=0.55 \columnwidth]{./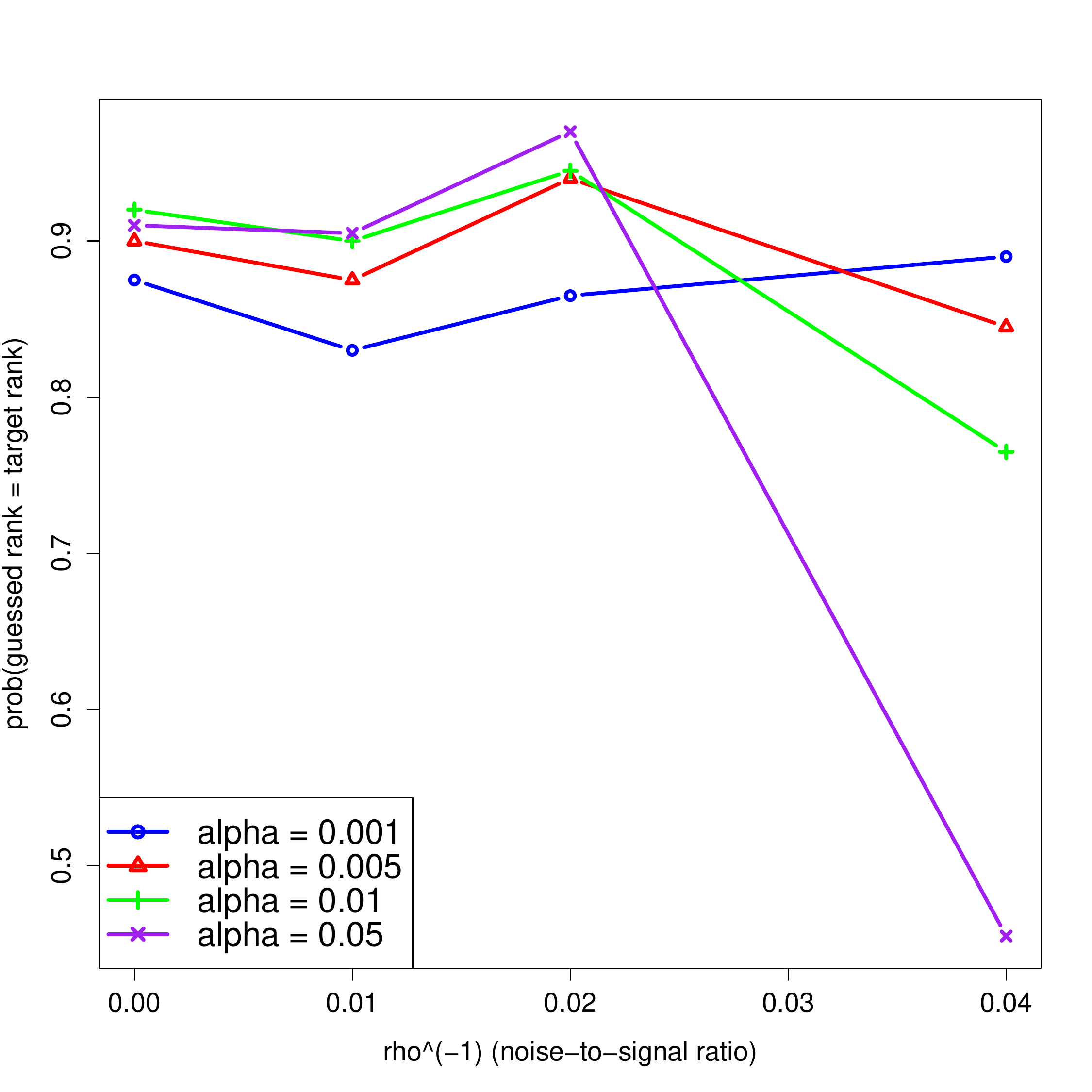}
}
\caption{Illustration of the robustness of our sequential procedure under a noisy model. } 
\label{seqtest.robustness}
\end{center}
\vskip -0.2in
\end{figure} 
%
%
\FloatBarrier
\section{Conclusion}
We introduced a new normality test suited to high-dimensional Hilbert spaces. 
It turns out to be more powerful than ongoing high- or infinite-dimensional tests (such as random projection). 
In particular, empirical studies showed a mild sensibility to high-dimensionality. Therefore our test can be used as a multivariate normality (MVN) test without suffering a loss of power when $d$ gets larger unlike other MVN tests (Henze-Zirkler, Energy-distance). 

If the Gaussian assumption is validated beforehand, our test becomes a general test on parameters. It is illustrated with an application to covariance rank selection that plugs our test into a sequential procedure. Empirical evidences show the good performances and the robustness of this method.    

As for future improvements, investigating the influence of the kernel $\bar{k}$ on the performance of the test would be of interest. In the case of the Gaussian kernel for instance, a method to optimize the Type-II error with respect to the hyperparameter $\sigma$ would be welcomed. This aspect has just began to be studied in \cite{Gretton2012_OptKern} when performing homogeneity testing with a convex combination of kernels.

Finally, the choice of the level $\alpha$ for the sequential procedure (covariance rank selection) is another subject for future research. Indeed, an asymptotic regime for $\alpha$ has been exhibited to get consistency, but setting the value of $\alpha$ when $n$ is fixed remains an open question.


%
%

\bibliographystyle{plain}
\bibliography{Refs}

\begin{thebibliography}{10}

\bibitem{Aronszajn}
N.~Aronszajn.
\newblock Theory of reproducing kernels.
\newblock May 1950.

\bibitem{Bien2011}
J.~Bien and R.~J. Tibshirani.
\newblock Sparse estimation of a covariance matrix.
\newblock {\em Biometrika}, 98(4):807--820, 2011.

\bibitem{NGCA_2006}
G.~Blanchard, M.~Sugiyama, M.~Kawanabe, V.~Spokoiny, and K.-R. Muller.
\newblock Non-gaussian component analysis: a semi-parametric framework for
  linear dimension reduction.
\newblock {\em NIPS}, 2006.

\bibitem{Book_ConcIneq}
S.~Boucheron, G.~Lugosi, and P.~Massart.
\newblock {\em Concentration Inequalities: A Nonasymptotic Theory of
  Independence}.
\newblock 2013.

\bibitem{BoucheronOrderStat}
S.~Boucheron and M.~Thomas.
\newblock Concentration inequalities for order statistics.
\newblock {\em Electronic Communications in Probability}, 17, 2012.

\bibitem{Bouveyron}
C.~Bouveyron, M.~Fauvel, and S.~Girard.
\newblock Kernel discriminant analysis and clustering with parsimonious
  gaussian process models.
\newblock 2012.

\bibitem{Brunel2013}
E.~Brunel, A.~Mas, and A.~Roche.
\newblock Non-asymptotic {Adaptive} {Prediction} in {Functional} {Linear}
  {Models}.
\newblock {\em arXiv preprint arXiv:1301.3017}, 2013.

\bibitem{Burke2000}
M.~D. Burke.
\newblock Multivariate tests-of-fit and uniform confidence bands using a
  weighted bootstrap.
\newblock {\em Statistics \& Probability Letters}, 46(1):13--20, January 2000.

\bibitem{Cardot2010}
H.~Cardot and J.~Johannes.
\newblock Thresholding projection estimators in functional linear models.
\newblock {\em Journal of Multivariate Analysis}, 101(2):395--408, 2010.

\bibitem{Choi2014}
Y.~Choi, J.~Taylor, and R.~Tibshirani.
\newblock Selecting the number of principal components: estimation of the true
  rank of a noisy matrix.
\newblock {\em arXiv preprint arXiv:1410.8260}, 2014.

\bibitem{Christmann2010}
A.~Christmann and I.~Steinwart.
\newblock Universal kernels on non-standard input spaces.
\newblock In {\em Advances in neural information processing systems}, pages
  406--414, 2010.

\bibitem{Diederichs2010}
E.~Diederichs, A.~Juditsky, V.~Spokoiny, and C.~Schutte.
\newblock Sparse non-{Gaussian} component analysis.
\newblock {\em Information Theory, IEEE Transactions on}, 56(6):3033--3047,
  2010.

\bibitem{Diederichs2013}
E.~Diederichs, A.~Juditsky, A.and~Nemirovski, and V.~Spokoiny.
\newblock Sparse non {Gaussian} component analysis by semidefinite programming.
\newblock {\em Machine learning}, 91(2):211--238, 2013.

\bibitem{Frigyik2008}
B.~A. Frigyik, S.~Srivastava, and M.~R. Gupta.
\newblock An introduction to functional derivatives.
\newblock {\em Dept. Electr. Eng., Univ. Washington, Seattle, WA, Tech. Rep},
  1, 2008.

\bibitem{Fukumizu2007_introchar}
K.~Fukumizu, A.~Gretton, X.~Sun, and B.~Schölkopf.
\newblock Kernel {Measures} of {Conditional} {Dependence}.
\newblock In {\em {NIPS}}, volume~20, pages 489--496, 2007.

\bibitem{CharKernGroups}
K.~Fukumizu, B.~Sriperumbudur, A.~Gretton, and B.~Schölkopf.
\newblock Characteristic kernels on groups and semigroups.
\newblock 2009.

\bibitem{Gretton_2007}
A.~Gretton, K.~Borgwardt, M.~Rasch, B.~Schoelkopf, and A.~Smola.
\newblock A kernel method for the two-sample-problem.
\newblock In B.~Schoelkopf, J.~Platt, and T.~Hoffman, editors, {\em Advances in
  Neural Information Processing Systems}, volume~19 of {\em MIT Press,
  Cambridge}, pages 513--520, 2007.

\bibitem{Kernel2sample}
A.~Gretton, K.M. Borgwardt, M.J. Rasch, B.~Schölkopf, and A.~Smola.
\newblock A kernel two-sample test.
\newblock {\em Journal of Machine Learning Research}, March 2012.

\bibitem{FastConsistentKern2Test}
A.~Gretton, K.~Fukumizu, Z.~Harchaoui, and B.~K. Sriperumbudur.
\newblock A fast, consistent kernel two-sample test.
\newblock 2009.

\bibitem{HSIC_2007}
A.~Gretton, K.~Fukumizu, C.~H. Teo, L.~Song, B~Sch\"olkopf, and A.~J. Smola.
\newblock A kernel statistical test of independence.
\newblock {\em NIPS}, 21, 2007.

\bibitem{Gretton2012_OptKern}
A.~Gretton, D.~Sejdinovic, H.~Strathmann, S.~Balakrishnan, M.~Pontil,
  K.~Fukumizu, and B.~K. Sriperumbudur.
\newblock Optimal kernel choice for large-scale two-sample tests.
\newblock In {\em Advances in {Neural} {Information} {Processing} {Systems}},
  pages 1205--1213, 2012.

\bibitem{HenzeZirkler}
N.~Henze and B.~Zirkler.
\newblock A class of invariant and consistent tests for multivariate normality.
\newblock {\em Comm. Statist. Theory Methods}, 19:3595--3617, 1990.

\bibitem{HoffmannJorgensen1976}
J.~Hoffmann-Jorgensen and G.~Pisier.
\newblock The law of large numbers and the central limit theorem in banach
  spaces.
\newblock {\em The Annals of Probability}, 4:587--599, 1976.

\bibitem{RandomProjTest}
R.~Fraiman C.~Matran J.A. Cuesta-Albertos, E. del~Barrio.
\newblock The random projection method in goodness of fit for functional data.
\newblock June 2006.

\bibitem{Josse2012}
J.~Josse and F.~Husson.
\newblock Selecting the number of components in principal component analysis
  using cross-validation approximations.
\newblock {\em Computational Statistics \& Data Analysis}, 56(6):1869--1879,
  2012.

\bibitem{Kojadinovic2012}
I.~Kojadinovic and J.~Yan.
\newblock Goodness-of-fit testing based on a weighted bootstrap: A fast
  large-sample alternative to the parametric bootstrap.
\newblock {\em Canadian Journal of Statistics}, 40:480--500, 2012.

\bibitem{Kosorok2007}
Michael~R. Kosorok.
\newblock {\em Introduction to empirical processes and semiparametric
  inference}.
\newblock Springer Science \& Business Media, 2007.

\bibitem{LehRom_2005}
E.L. Lehmann and J.~P. Romano.
\newblock {\em Testing Statistical hypotheses}.
\newblock Springer, 2005.

\bibitem{MardiaSkewKurt}
K.V. Mardia.
\newblock Measures of multivariate skewness and kurtosis with applications.
\newblock {\em Biometrika}, 57:519--530, 1970.

\bibitem{Ratsimalahelo_2003}
Z.~Ratsimalahelo.
\newblock Strongly consistent determination of the rank of matrix.
\newblock {\em Econometrics}, 2003.

\bibitem{RobinSmith_2000}
J.-M. Robin and R.~J. Smith.
\newblock Tests of rank.
\newblock {\em Econometric Theory}, 16:151--175, 2000.

\bibitem{Roth2006}
V.~Roth.
\newblock Kernel {Fisher} {Discriminants} for {Outlier} {Detection}.
\newblock {\em Neural Computation}, 18(4):942--960, 2006.

\bibitem{Roweis1998}
S.~Roweis.
\newblock {EM} {Algorithms} for {PCA} and {SPCA}.
\newblock In {\em in {Advances} in {Neural} {Information} {Processing}
  {Systems}}, pages 626--632. MIT Press, 1998.

\bibitem{KernelPCA}
B.~Sch\"olkopf, A.~Smola, and K.-R. M\"uller.
\newblock Nonlinear {Component} {Analysis} as a {Kernel} {Eigenvalue}
  {Problem}.
\newblock {\em Neural Computation}, 10:1299--1319, 1997.

\bibitem{SerflingBook}
R.~J. Serfling.
\newblock {\em Approximation Theorems for Mathematical Statistics}.
\newblock John Wiley \& Sons, 1980.

\bibitem{HilbertEmbed}
B.~K. Sriperumbudur, A.~Gretton, K.~Fukumiju, B.~Schölkopf, and G.~R.~G.
  Lanckriet.
\newblock Hilbert space embeddings and metrics on probability measures.
\newblock {\em Journal of Machine Learning Research}, pages 1517--1561, 2010.

\bibitem{SrivastavaHiDimMeanTest}
M.S. Srivastava, S.~Katayama, and Y.~Kano.
\newblock A two-sample test in high dimensional data.
\newblock {\em Journal of Multivariate Analysis}, pages 349--358, 2013.

\bibitem{Stute1993}
W.~Stute, W.~G. Manteiga, and M.~P. Quindimil.
\newblock Bootstrap based goodness-of-fit-tests.
\newblock {\em Metrika}, 40(1):243--256, 1993.

\bibitem{Svantesson2003}
T.~Svantesson and J.~W. Wallace.
\newblock Tests for assessing multivariate normality and the covariance
  structure of {MIMO} data.
\newblock In {\em Acoustics, {Speech}, and {Signal} {Processing}, 2003.
  {Proceedings}.({ICASSP}'03). 2003 {IEEE} {International} {Conference} on},
  volume~4, pages IV--656. IEEE, 2003.

\bibitem{SzekelyRizzoENormTest}
G.J. Szekely and R.L. Rizzo.
\newblock A new test for multivariate normality.
\newblock {\em Journal of Multivariate Analysis}, 93:58--80, 2005.

\bibitem{ZwaldKPCA}
L.~Zwald.
\newblock {\em Performances d'Algorithmes Statistiques d'Apprentissage: "Kernel
  Projection Machine" et Analyse en Composantes Principales à Noyaux}.
\newblock 2005.

\end{thebibliography}


\appendix 

\section{Goodness-of-fit tests}

\subsection{Henze-Zirkler test}
\label{appendix.mvn.tests.hz}
The Henze-Zirkler test \cite{HenzeZirkler} relies on the following statistic
\begin{equation}
HZ = \int_{\Reals^d} \abs{ \hat{\Psi}(t) - \Psi(t) }^2 \omega(t) dt \ens ,
\end{equation}
where $\Psi(t)$ denotes the characteristic function of $\mathcal{N}(0,I)$, $\hat{\Psi}(t) = n^{-1} \sum_{j = 1}^n e^{i \langle t, Y_j\rangle }$ is the empirical characteristic function of the sample $Y_1, \dots, Y_n$, and $\omega(t) = (2 \pi \beta)^{-d/2} \exp(-||t||^2/(2 \beta))$ with $\beta = 2^{-1/2} [(2d + 1) n)/4]^{1/(d+4)}$. The $\hypot{0}$-hypothesis is rejected for large values of $HZ$. Note that the sample $Y_1,\ldots,Y_n$ must be whitened (centered and renormalized) beforehand.

\subsection{Energy distance test}
\label{appendix.mvn.tests.ed}
The energy distance test \cite{SzekelyRizzoENormTest} is based on 
\begin{equation}
\mathcal{E}(P, P_0) = 2 \Esp ||Y - Z|| - \Esp ||Y - Y^{\prime}|| - \Esp ||Z - Z^{\prime}|| \ens 
\end{equation}
which is called the \emph{energy distance}, where $Y, Y^{\prime} \sim P$ and $Z, Z^{\prime} \sim P_0$. 
Note that $\mathcal{E}(P, P_0) = 0$ if and only if $P = P_0$.
The test statistic is given by
\begin{align}
\hat{\mathcal{E}} = & \frac{2}{n} \sum_{i=1}^n \Esp_Z || Y_i - Z || - \Esp_{Z, Z^{\prime}} ||Z - Z^{\prime}|| \notag \\
         & \hspace*{2cm} -\frac{1}{n^2} \sum_{i, j = 1}^n ||Y_i - Y_j|| \hfill \ens ,
\end{align}
where $Z, Z^{\prime} \stackrel{\iid}{\sim} \mathcal{N}(0, I)$ (null-distribution).
HZ and ED tests set the $\hypot{0}$-distribution at $P_0 = \mathcal{N}(\hat{\mu}, \hat{\Sigma})$ where $\hat{\mu}$ and $\hat{\Sigma}$ are respectively the standard empirical mean and covariance.
As for the Henze-Zirkler test, data must be centered and renormalized.

\subsection{Projection-based statistical tests}
\label{appendix.mvn.tests.rp}
In the high-dimensional setting, several approaches share a common idea consisting in projecting on one-dimensional spaces. This idea relies on the Cramer-Wold theorem extended to infinite dimensional Hilbert space.
\begin{proposition}{(Prop.~2.1 from \cite{RandomProjTest})} 
Let $\mathcal{H}$ be a separable Hilbert space with inner product $\langle \cdot , \cdot\rangle $, and $Y,Z\in\mathcal{H}$ denote two random variables with respective Borel probability measures $P_Y$ and $P_Z$. 
If for every $h \in \mathcal{H}$, $\langle Y, h\rangle  = \langle Z, h\rangle $ weakly then $ P_Y = P_Z $.
\end{proposition}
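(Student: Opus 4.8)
The plan is to reduce the statement to the uniqueness theorem for characteristic functionals on a separable Hilbert space, and then to establish that uniqueness theorem by passing through finite-dimensional marginals. First I would translate the hypothesis into an equality of characteristic functionals. Fix $h \in \mathcal{H}$. The assumption that $\langle Y, h\rangle$ and $\langle Z, h\rangle$ agree in distribution means their (real-valued) characteristic functions coincide; evaluating these characteristic functions at argument $1$ for every $h$ (equivalently, exploiting that $th$ also ranges over $\mathcal{H}$) yields
\[
\varphi_Y(h) := \Esp\left[ e^{i \langle Y, h\rangle} \right] = \Esp\left[ e^{i \langle Z, h\rangle} \right] =: \varphi_Z(h), \qquad \forall h \in \mathcal{H}.
\]
Thus $Y$ and $Z$ share the same characteristic functional, and it remains to show that the characteristic functional determines the law.

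Next I would descend to finite dimensions. By separability, fix a complete orthonormal system $(e_k)_{k \geq 1}$ of $\mathcal{H}$, and for each $n$ let $\pi_n : \mathcal{H} \to \Reals^n$ be the coordinate map $y \mapsto (\langle y, e_1\rangle, \ldots, \langle y, e_n\rangle)$. For $h = \sum_{k=1}^n t_k e_k$, the characteristic functional restricted to $\mathrm{span}(e_1, \ldots, e_n)$ is exactly the ordinary characteristic function of the push-forward measure $(\pi_n)_* P_Y$ evaluated at $(t_1, \ldots, t_n) \in \Reals^n$. Since $\varphi_Y = \varphi_Z$, the push-forwards $(\pi_n)_* P_Y$ and $(\pi_n)_* P_Z$ have the same characteristic function on $\Reals^n$, so the classical finite-dimensional uniqueness theorem for characteristic functions forces $(\pi_n)_* P_Y = (\pi_n)_* P_Z$. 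Hence all finite-dimensional marginals of $P_Y$ and $P_Z$ coincide.

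Finally I would lift this to equality of the full measures. The cylinder sets $\pi_n^{-1}(B)$, with $B \in \mathcal{B}(\Reals^n)$ and $n \geq 1$, form a $\pi$-system, and on a separable Hilbert space this $\pi$-system generates the Borel $\sigma$-algebra $\mathcal{B}(\mathcal{H})$, because every open ball, and hence every open set, is measurable with respect to the coordinate functionals. Equality of the finite-dimensional marginals says precisely that $P_Y$ and $P_Z$ agree on this generating $\pi$-system, so Dynkin's $\pi$-$\lambda$ theorem gives $P_Y = P_Z$ on all of $\mathcal{B}(\mathcal{H})$. The \emph{main obstacle} is exactly this last step: one must genuinely use separability to know that the Borel $\sigma$-algebra coincides with the cylinder $\sigma$-algebra generated by the coordinate functionals. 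Without separability the norm need not be cylinder-measurable, and the finite-dimensional marginals would no longer determine the law; everything else — the translation into characteristic functionals and the finite-dimensional uniqueness theorem — is standard bookkeeping.
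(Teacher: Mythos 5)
Your proof is correct, but there is nothing in the paper to compare it against: the authors state this proposition as an imported result, citing Proposition~2.1 of \cite{RandomProjTest}, and give no proof of their own. Your argument is the standard route to the Hilbert-space Cram\'er--Wold theorem and each step holds up. The reduction to equality of characteristic functionals $\varphi_Y(h)=\Esp\,e^{i\langle Y,h\rangle}$ is immediate from the hypothesis (taking $th$ for $t\in\Reals$, or just $t=1$ over all $h$); restricting $h$ to $\mathrm{span}(e_1,\ldots,e_n)$ correctly identifies $\varphi_Y$ with the characteristic function of the push-forward $(\pi_n)_*P_Y$ on $\Reals^n$, so the finite-dimensional uniqueness theorem gives equality of all finite-dimensional marginals; and the cylinder sets do form a $\pi$-system generating $\mathcal{B}(\mathcal{H})$, since $\Vert y-y_0\Vert^2=\sum_{k\ge 1}\langle y-y_0,e_k\rangle^2$ is cylinder-measurable and, by separability, every open set is a countable union of balls, after which Dynkin's $\pi$--$\lambda$ theorem closes the argument. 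You are also right to flag separability as the genuinely load-bearing hypothesis in the last step. One remark for context: the cited source actually proves a much sharper statement (that under moment conditions a single suitably random direction suffices), of which the version quoted here is the easy "all directions" special case; your elementary argument is entirely adequate for the statement as it appears in this paper.
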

%
\cite{RandomProjTest} suggest to randomly choose directions $h$ from a Gaussian measure and perform a Kolmogorov-Smirnov test on $\prodscal{Y_1}{h} , \ldots, \prodscal{Y_n}{h}$ for each $h$, leading to the test statistic 
\begin{align}
D_n(h) = \sup_{x \in \Reals} |\hat{F}_n (x) - F_0(x)|
\end{align}
where $\displaystyle \hat{F}_n(x) = (1/n) \sum_{i=1}^n \indc_{\{\langle Y_i, h\rangle \leq x \}}$ is the empirical cumulative distribution function (cdf) of $\langle Y_1, h\rangle ,\ldots,\langle Y_n,h\rangle $ and $F_0(x) = \Prb( \langle Y, h\rangle \leq x )$ denotes the cdf of $\langle Z, h\rangle $.

Since \cite{RandomProjTest} proved too few directions lead to a less powerful test, this can be repeated for several randomly chosen directions $h$, keeping then the largest value for $D_n(h)$.
However the test statistic is no longer distribution-free (unlike the univariate Kolmogorov-Smirnov one) when the number of directions is larger than 2. 

\section{Proofs}

Throughout this section, $\langle .,.\rangle $ (resp. $||.||$) denotes either $\langle .,.\rangle _\hsp$ or $\langle ~.~,.\rangle _{H(\bk)}$ (resp. $||.||_\hsp$ or $||.||_{H(\bk)}$) depending on the context.

\subsection{Proof of Propositions \ref{prop.teststat.gauss} and \ref{prop.teststat.exp}}
\label{appendix.proof.prop.teststat}
%
%
Consider the eigenexpansion of $\tsig = \sum_{i \geq 1} \lambda_i \Psi_i^{\otimes 2}$ where $\lambda_1, \lambda_2, \ldots$ is a decreasing sequence of positive reals and where $\{\Psi_{i}\}_{i \geq 1}$ form a complete orthonormal basis of $\hsp$. 

Let $Z \sim \mathcal{N}(\tm, \tsig)$. The orthogonal projections $\langle Z, \Psi_i\rangle $ are $\mathcal{N}(0, \lambda_{i})$ and for $i \neq j$, $\langle Z, \Psi_i\rangle $ and $\langle Z, \Psi_{j}\rangle $ are independent. Let $Z^{'}$ be an independent copy of $Z$.

\bigskip
$\bullet$ \textbf{Gaussian kernel case : } $\bk(.,.) = \exp( - \sigma \Vert \cdot - \cdot \Vert^2_\hsp)$

Let us first expand the following quantity
\begin{align}
\NN{\tm}{\tsig}(y)  & =  \Esp_{Z} \exp\left( - \sigma ||Z - y||^2 \right) \notag \\
                     & =  \Esp_{Z} \exp\left( - \sigma \sum_{i \geq 1} \langle Z - y, \Psi_i \rangle ^2 \right) \notag \\
                     & = \prod_{i=1}^{+ \infty} \Esp_{Z} \exp\left(- \sigma \sum_{i \geq 1} \langle Z - y, \Psi_i \rangle ^2 \right)  \label{expandnorm0.gauss} \\
                     & = \prod_{i=1}^{+ \infty} \Esp_Z \exp\left(-\sigma \langle Z - y, \Psi_i\rangle^2 \right)   \notag\\
                     & = \prod_{i=1}^{+ \infty}  (1+2\sigma \lambda_r)^{-1/2} \exp\left( - \sigma \frac{\langle  \tm - y, \Psi_r\rangle ^2}{1 + 2 \sigma \lambda_r}   \right) \notag \\
                     & = \left| I + 2 \sigma \tsig \right|^{-1/2} \exp(-\sigma \langle (I + 2 \sigma \tsig)(y - \tm), y - \tm \rangle ) \notag
\end{align}
We can switch the mean and the limit in \eqref{expandnorm0.gauss}   by using the Dominated Convergence theorem since for every $N \geq 1$
\begin{equation*}
\left| \prod_{i=1}^{N} \exp\left(- \sigma \langle Z - y, \Psi_i \rangle ^2 \right) \right| \leq 1 < +\infty \ens .
\end{equation*}
%
%
%
The second quantity $||\NN{\tm}{\tsig}||^{2}$ is computed likewise
\begin{align}
||\NN{\tm}{\tsig}||^{2} & =  \Esp_{Z, Z^{'}} \exp\left( - \sigma ||Z - Z^{'}||^2 \right) \notag \\
                     & =  \Esp_{Z, Z^{'}} \exp\left( - \sigma \sum_{i \geq 1} \langle Z - Z^{'}, \Psi_i \rangle ^2 \right) \notag \\
                     & = \prod_{i=1}^{+ \infty} \Esp_{Z, Z^{'}} \exp\left(- \sigma \sum_{i \geq 1} \langle Z - Z^{'}, \Psi_i \rangle ^2 \right)  \notag \\
                     & = \prod_{i=1}^{+ \infty} \Esp_Z \Esp_{Z^{'}}\left(\exp\left(-\sigma \langle Z -Z^{'}, \Psi_i\rangle \right) \mid Z\right)  \notag\\
                     & = \prod_{i=1}^{+ \infty} \Esp_Z (1+2\sigma \lambda_r)^{-1/2} \exp\left( - \sigma \frac{\langle  \tm - Z, \Psi_r\rangle ^2}{1 + 2 \sigma \lambda_r}   \right) \notag \\
                     & = \prod_{i=1}^{+ \infty} (1+2\sigma \lambda_r)^{-1/2} \Esp_{U \sim \mathcal{N}(0, \lambda_r)} \exp\left( - \frac{\sigma U^2}{1 + 2 \sigma \lambda_r}   \right) \notag \\
                     & = \prod_{i=1}^{+ \infty} (1+2\sigma \lambda_r)^{-1/2} \left( 1 + \frac{2 \sigma \lambda_r}{1 + 2 \sigma \lambda_r} \right)^{-1/2} \notag \\
                     & = \prod_{i=1}^{+ \infty} (1 + 4 \sigma \lambda_r)^{-1/2} \notag \\
                     & = \left| I + 4 \sigma \tsig\right|^{-1/2} \notag
\end{align}

\bigskip
$\bullet$ \textbf{Exponential kernel case : } $\bk(.,.) = \exp(\langle \cdot,  \cdot\rangle _\hsp)$

Let us first expand the following quantity
\begin{align}
\NN{\tm}{\tsig}(y)  & =  \Esp_{Z} \exp\left( \langle Z, y\rangle  \right) \notag \\
                     & =  \Esp_{U \sim \mathcal{N}(\langle \tm, y\rangle , \langle \tsig y, y\rangle )} \exp\left( U \right) \notag \\
                     & = \exp(\langle \tm, y\rangle  + (1/2) \langle \tsig y, y\rangle ) \notag .
\end{align}

Expanding $||\NN{\tm}{\tsig}||^{2}$,
\begin{align}
||\NN{\tm}{\tsig}||^{2} & =  \Esp_{Z, Z^{'}} \exp\left(\langle Z, Z^{'}\rangle \right) \notag \\
                     & =  \Esp_{Z, Z^{'}} \exp\left(\sum_{i \geq 1} \langle Z, \Psi_i\rangle  \langle Z^{'}, \Psi_i\rangle \right) \notag \\
                     & = \prod_{i=1}^{\infty} \Esp_{Z, Z^{'}} \exp\left(\langle Z, \Psi_i\rangle  \langle Z^{'}, \Psi_i\rangle \right) \ens .
                     \label{expandnorm.exp}
\end{align}
We can switch the mean and the limit by using the Dominated Convergence theorem since
\begin{equation*}
\left|\prod_{i=1}^{N} \exp\left(\langle Z, \Psi_i\rangle  \langle Z^{'}, \Psi_i\rangle \right)\right| \overset{a.s.}{\underset{N \to \infty}{\longrightarrow}} \exp\left(\langle Z, Z^{'}\rangle \right) \leq \exp\left(\frac{||Z||^2 + ||Z^{'}||^2}{2}\right) \ens ,
\end{equation*}
and 
\begin{equation*}
\Esp_{Z, Z^{'}} \exp\left(\frac{||Z||^2 + ||Z^{'}||^2}{2}\right) = \left[\Esp_Z \exp\left(\frac{||Z||^2}{2}\right)\right]^{2} = \Esp \bk^{1/2}(Z, Z) < +\infty \ens .
\end{equation*}
The integrability of $\bk^{1/2}(Z, Z)$ is necessary to ensure the existence of the embedding $\NN{\tm}{\tsig}$ related to the distribution of $Z$. As we will see thereafter, it is guaranteed by the condition $\hat{\lambda}_1 < 1$.

For each $i$,
\begin{align}
\Esp_{Z, Z^{'}} \exp\left(\langle Z, \Psi_i\rangle  \langle Z^{'}, \Psi_i\rangle \right) & = \Esp_Z \Esp_{Z^{'}}\left(\exp\left(\langle Z, \Psi_i\rangle  \langle Z^{'}, \Psi_i\rangle \right) \mid Z\right)\notag\\
                     & = \Esp_Z \exp\left(\frc{\lambda_i}{2} \langle Z, \Psi_i\rangle ^2\right)\notag\\
					 & = (1 - \lambda^2_i)^{-1/2} \label{expandEsp.exp}
\end{align}

Plugging \eqref{expandEsp.exp} into Equation \eqref{expandnorm.exp},
                     
\begin{align}                     
 ||\NN{\tm}{\tsig}||^{2} = \prod_{i=1}^{\infty} (1 - \gamma_{i}^{2})^{-1/2} = \left|I - \Sigma^2 \right|^{-1/2} \ens .
                     \label{FinalProofEq2}
\end{align}

\eqref{FinalProofEq2} is well defined only if $\left|I - \Sigma^2 \right| > 0$. As we have assumed that $ \lambda_i < 1$ for all $i$, it is positive. The non-nullity also holds since
\begin{align}
\prod_{i=1}^{\infty} [1-\lambda_{i}^{2}] = \exp\left(- \sum_{i=1}^{\infty} \log\left( \frac{1}{1 - \lambda_{i}^{2}} \right) \right) & \geq  \exp\left( - \sum_{i=1}^{\infty} \left( \frac{1}{1 - \lambda_{i}^{2}} - 1\right)  \right)  \notag\\
  & =  \exp\left(- \sum_{i=1}^{\infty} \frac{\lambda_{i}^{2}}{1 - \lambda_{i}^{2}} \right)\notag\\
  & \geq  \exp\left( \frac{- \mathrm{Tr}( \Sigma^2 )}{1 - \lambda_1^2 }  \right)  \ens  . \label{checkStrictPosA}
\end{align}
where we used the inequality: $\log(x) \leq x - 1$. Since $\Sigma$ is a finite trace operator, its eigenvalues converge towards $0$ and $\lambda_i^2 \leq \lambda_i < 1$ for $i$ large enough. Thus, $\mathrm{Tr} (\Sigma^2)$ is finite and it follows from \eqref{checkStrictPosA} that $\left| I - \Sigma^2 \right| > 0$.

%

\subsection{Proof of Theorem \ref{prop.altern.parambt}}
\label{proof.prop.altern.parambt}

The proof of Proposition \ref{prop.altern.parambt} follows the same idea as the original paper \cite{Kojadinovic2012}  and broadens its field of applicability. Namely, the parameter space does not need to be a subset of $\Reals^d$ anymore. The main ingredient for our version of the proof is to use the CLT in Banach spaces \cite{HoffmannJorgensen1976} instead of the multiplier CLT for empirical processes (\cite{Kosorok2007}, Theorem $10.1$).   

We introduce the following notation : 
\begin{itemize}
\item $\theta_0 = (m_0, \Sigma_0)$ denotes the true parameters
\item $\theta_n = (\hat{m}, \hat{\Sigma})$ denotes the empirical parameters
\item For any $\theta = (m, \Sigma)$ and $y \in \hsp$, $\Psi_\theta(y) = (y - m, (y - m)^{\otimes 2} - \Sigma)$.
\item $\theta_{0,n}^b = (1/n) \sum_{i=1}^n (Z^b_i - \bar{Z}^b)\Psi_{\theta_0} (Y_i)$ and $\theta_n^b = (1/n) \sum_{i=1}^n (Z^b_i - \bar{Z}^b)\Psi_{\theta_n} (Y_i)$
%
\end{itemize}

Define the covariance operators $C_1: H(\bar{k}) \to H(\bar{k})$, $C_2: \Theta \to \Theta$ and $C_{1,2}: \Theta \to H(\bar{k})$ as
\begin{align*}
C_1  & = \mathrm{Var}\left(\sqrt{n} (\hat{\bmu}_P - \bar{\mu}_P) \right) = \Esp_Y (\bar{k}(Y, .) - \bar{\mu}_P)^{\otimes 2}\\
C_2 & = \mathrm{Var}\left( n^{-1/2} \sum_{i=1}^n \Psi_{\theta_0} (Y_i) \right) = \Esp_Y \Psi_{\theta_0}(Y)^{\otimes 2}\\
C_{1,2} & = \mathrm{cov}\left( \sqrt{n} (\hat{\bmu}_P - \bar{\mu}_P) , n^{-1/2} \sum_{i=1}^n \Psi_{\theta_0} (Y_i) \right) = \Esp_Y (\bar{k}(Y, .) - \bar{\mu}_P) \otimes \Psi_{\theta_0}(Y)  \ens .
\end{align*}
From \cite{HoffmannJorgensen1976}, the CLT is verified in a Hilbert space under the assumption of finite second moment (satisfied in our case since $\mathrm{Tr}(C_1) = \Esp_P \bar{k}(Y, Y) - ||\bar{\mu}_P||^2 < +\infty$ and $\mathrm{Tr}(C_2) = \Esp_P ||Y - \mu_0||^4 + \mathrm{Tr}(\Sigma_0 - \Sigma_0^2) < +\infty$ by assumption). Therefore,
\begin{align*}
\sqrt{n} (\hat{\bmu}_P - \bar{\mu}_P) & \underoverset{n \to +\infty}{\mathcal{L}}{\longrightarrow} G_P \sim \mathcal{GP}(0, C_1) \\
n^{-1/2} \sum_{i=1}^n \Psi_{\theta_0} (Y_i) & \underoverset{n \to +\infty}{\mathcal{L}}{\longrightarrow} U_P \sim \mathcal{GP}(0, C_2)
\end{align*}
Introducing
\begin{align*}
C_1^b := \mathrm{Var}\left(\sqrt{n} \bar{\mu}^b_{\hat{P}}  \right)&  = n^{-1} \sum_{i=1}^n \Esp(Z_i - \bar{Z})^2 C_1 = (1 - 1/n) C_1 \\
C_2^b := \mathrm{Var}\left(\sqrt{n} \theta^b_{0,n} \right)&  = n^{-1} \sum_{i=1}^n \Esp(Z_i - \bar{Z})^2 C_2 = (1 - 1/n) C_2 \\
C_{1,2}^b := \mathrm{cov}\left(\sqrt{n} \theta^b_{0,n} , \sqrt{n} \hat{\bmu}^b_{P} \right) & = n^{-1} \sum_{i=1}^n \Esp(Z_i - \bar{Z})^2 C_{1, 2} = (1 - 1/n) C_{1,2} \ens ,
\end{align*}
we derive likewise
\begin{align*}
\sqrt{n} \hat{\bmu}^b_{P} & \underoverset{n \to +\infty}{\mathcal{L}}{\longrightarrow} G^{'}_P \sim \mathcal{GP}(0, C_1)\\
\sqrt{n} \theta^b_{0, n} & \underoverset{n \to +\infty}{\mathcal{L}}{\longrightarrow}  U^{'}_P \sim \mathcal{GP}(0, C_2) \ens .
\end{align*}
Since
\begin{equation*}
\begin{array}{ll}
\mathrm{cov}\left(\sqrt{n} \hat{\bmu}^b_{P} , \sqrt{n} (\hat{\bmu}_P - \bar{\mu}_P) \right)  = 0 , &   \mathrm{cov}\left(\sqrt{n} \theta^b_{0,n} , n^{-1/2} \sum_{i=1}^n \Psi_{\theta_0} (Y_i) \right)  =  0 \\
&\\
\mathrm{cov}\left(\sqrt{n} \hat{\bmu}^b_{P} , n^{-1/2} \sum_{i=1}^n \Psi_{\theta_0} (Y_i) \right) = 0 , & \mathrm{cov}\left(\sqrt{n} \theta^b_{0,n} , \sqrt{n} (\hat{\bmu}_P - \bar{\mu}_P) \right)  =  0 \ens ,
\end{array}
\end{equation*}
the limit Gaussian processes $(G_P, U_P)$ and $(G^{'}_P, U^{'}_P)$ are independent.\\
Since $D_{\theta_0} (N o \TT)$ is assumed continuous w.r.t. $\theta_0$, we get by the continuous mapping theorem that
\begin{align*}
\left(\sqrt{n} (\hat{\bmu}_P - \bar{\mu}_P) - D_{\theta_0} \NNoT{n^{-1/2} \sum_{i=1}^n \Psi_{\theta_0}(Y_i)]}{\sqrt{n} \bar{\mu}^b_{\hat{P}} - D_{\theta_0} (N o \TT) [\sqrt{n} \theta_{0,n}^b]} \right) \ens , 
\end{align*}
converges weakly to
\begin{align*}
\left(G_P - D_{\theta_0} \NNoTbis{U_P}, G^{'}_P - D_{\theta_0} \NNoTbis{U^{'}_P}\right) \ens .
\end{align*}
To get the final conclusion of Proposition \ref{prop.altern.parambt}, we have to prove two remaining things.\\
First, under the assumption that $P = \mathcal{N}(\TT(\theta_0))$,
\begin{align*}
\sqrt{n} (\hat{\bmu}_P - N o \TT[\theta_n])  = & \sqrt{n} (\hat{\bmu}_P - \bar{\mu}_{P}) + \sqrt{n} (N o \TT[\theta_0] - N o \TT[\theta_n]) \\
 = & \sqrt{n} (\hat{\bmu}_P - \bar{\mu}_{P}) - D_{\theta_0} (N o \TT)[\sqrt{n} (\theta_n - \theta_0)] + o_P(\sqrt{n} || \theta_n - \theta_0 ||) \\
 = & \sqrt{n} (\hat{\bmu}_P - \bar{\mu}_{P}) - D_{\theta_0} (N o \TT)[n^{-1/2} \sum_{i=1}^n \Psi_{\theta_0}(Y_i)] \\
 & + D_{\theta_0} (N o \TT)[o_P(1)] + o_P(\sqrt{n} || \theta_n - \theta_0 ||) \\
 \underoverset{n \to +\infty}{\mathcal{L}}{\longrightarrow} & G_P - D_{\theta_0} (N o \TT)[U_P]  \ens , 
\end{align*}
because $\sqrt{n}(\theta_n - \theta_0)$ converges weakly to a zero-mean Gaussian and by using the continous mapping theorem with the continuity of $\theta_0 \mapsto D_{\theta_0}\NN$ and of $||.||_{\Theta}$. \\
Secondly, whether $\hypot{0}$ is true or not,
\begin{align*}
\sqrt{n} \hat{\bmu}^b_{P} - D_{\theta_n} (N o \TT) [\sqrt{n} \theta_{n}^b] = &  \sqrt{n} \hat{\bmu}^b_{P} - D_{\theta_0} (N o \TT) [\sqrt{n} \theta_{0,n}^b]  + \underbrace{D_{\theta_n} (N o \TT) [\sqrt{n}(\theta_{0,n}^b - \theta_{n}^b)]}_{:=(a)} \\
& +\underbrace{ D_{\theta_0} (N o \TT) [\sqrt{n} \theta_{0,n}^b] - D_{\theta_n} (N o \TT) [\sqrt{n} \theta_{0,n}^b] }_{:=(b)}  \ens .  
\end{align*}
we must check that both $(a)$ and $(b)$ converge $P$-almost surely to $0$.\\
Since by the continuous mapping theorem
\begin{align*}
(a) =  \underbrace{\left(n^{-1/2} \sum_{i=1}^n (Z_i - \bar{Z})\right)}_{\longrightarrow \mathcal{N}(0, 1)}  D_{\theta_n} (N o \TT)[ \underbrace{(m_0 - m, \Sigma_0 - \Sigma_n}_{\to 0 \text{ a.s.}})] \underoverset{n \to +\infty}{P - a.s.}{\longrightarrow} 0 \ens ,
\end{align*}
and since 
\begin{align*}
(b) = D_{\theta_0} (N o \TT) [\sqrt{n} \theta_{0,n}^b] - D_{\theta_n} (N o \TT) [\sqrt{n} \theta_{0,n}^b] \longrightarrow 0 \ens P\text{-almost surely} \ens ,
\end{align*}
as $\theta_n(\omega) \to \theta_0$ $P$-almost surely and $N$ is continuous w.r.t. $\theta$, it follows
\begin{align*}
\sqrt{n} \bar{\mu}^b_{\hat{P}} - D_{\theta_n} (N o \TT) [\sqrt{n} \theta_{n}^b] \longrightarrow G^{'}_P - D_{\theta_0} (N o \TT)[U^{'}_P] \ens ,
\end{align*}
hence the conclusion of Proposition \ref{prop.altern.parambt}.
%

\subsection{Proof of Theorem \ref{PropTypeIIerr}}
\label{appendix.proof.prop.t2e}


The goal is to get an upper bound for the Type-II error 
\begin{align}
\Prb(n \hDn^2 \leq \hat{q} \mid \hypot{1}) \ens . \label{typeIIerror} 
\end{align}
In the following, the feature map from $H(k)$ to $H(\bar{k})$ will be denoted as
\begin{align*}
\bar{\phi} : H(k) \to H(\bar{k}), \ens y \mapsto \bar{k}(y, .) \ens .
\end{align*}
Besides, we use the shortened notation $q := \Esp \hat{q}_{\alpha, n, B}$ for the sake of simplicity (see Section~\ref{ssec.theoassess.t2e} for definitions).
\begin{enumerate}
\item \textbf{Reduce $n \hDn^2$ to a sum of independent terms}

The first step consists in getting a tight upper bound for \eqref{typeIIerror} which involve a sum of independent terms. This will allow the use of a Bennett concentration inequality in the next step.\\
Introducing the Fr\'echet derivative $D_{\theta_0} (N o \TT)$ of $N o \TT$ at $\theta_0$, $n \hDn^2$ is expanded as follows
\begin{align}
n \hDn^2 = &  \frac{1}{n} \sum_{i, j = 1}^n \langle  \bar{\phi}(Y_i) - N o \TT (\theta_n),  \bar{\phi}(Y_j) - N o \TT (\theta_n)\rangle  \notag \\
		:= & n \hDn^2_{0} + n \Delta^2 + 2 n S_n + \epsilon \ens .
		\label{hoeffding.exp}
\end{align}
where 
\begin{align*}
\hDn_0^2 =  \frac{1}{n^{2}} \sum_{i, j = 1}^n \langle  \Xi(Y_i),  \Xi(Y_j)\rangle  \ens ,
\end{align*}
\begin{align*}
S_n = \frac{1}{n} \sum_{i=1}^n \langle  \bar{\mu}_P - N o \TT(\theta_0), \Xi(Y_i)\rangle  \ens ,
\end{align*}
\begin{align*}
\Xi(Y_i) = D_{\theta_0} (N o \TT)(\Psi(Y_i)) - \bar{\phi}(Y_i) + \bar{\mu}_P \ens ,
\end{align*}
and $\epsilon = o_n(1)$ almost surely.

$n \hDn^2_{0}$ is a degenerate U-statistics so it converges weakly to a sum of weighted chi-squares (\cite{SerflingBook}, page 194). $\sqrt{n} S_n$ converges weakly to a zero-mean Gaussian by the classic CLT as long as $\Esp \langle  \bar{\mu}_P - N o \TT(\theta_0), \Xi(Y_i)\rangle ^2$ is finite (which is true since $\bar{k}$ is bounded). It follows that $\hDn^2_{0}$ becomes negligible with respect to $S_n$ when $n$ is large. Therefore, we consider a surrogate for the Type-II error \eqref{typeIIerror} by removing $\hDn^2_{0}$ with a negligible loss of accuracy.
Plugging \eqref{hoeffding.exp}  into \eqref{typeIIerror} 
\begin{align}
\Prb(n \hDn^2 \leq \hat{q}) = & \Prb(n \hDn^2_{0} + n L^2 + 2 n S_n  \leq \hat{q} - \epsilon ) \notag \\
	= & (1 + o_n(1)) \ens \Prb(n \hDn^2_{0} + n L^2 + 2 n S_n  \leq \hat{q}  ) \ens .
\end{align}
%
%
Finally, using $\hDn_0^2 \geq 0$ and conditionning on $\hat{q}$ yield the upper bound
\begin{align}
\Prb(n \hDn^2 \leq \hat{q} \mid \hat{q}) \ens \leq  (1 + o_n(1)) \ens \Prb\left(\sum_{i=1}^n f(Y_i) \geq n \hat{s} \right) \label{typeIIerr.first.bound} \ens ,
\end{align}
where 
\begin{align}
f(Y_i) := \ens  \langle  \bar{\mu}_P - N o \TT(\theta_0), \Xi(Y_i)\rangle  \ens , \quad 
\hat{s}	:= \ens L^2 - \frac{\hat{q}}{n}  \ens . \notag
\end{align}
%
%
%
\item \textbf{Apply a concentration inequality}

We now want to find an upper bound for \eqref{typeIIerr.first.bound} through a concentration inequality, namely Lemma~\ref{LemmaBennett} with $\xi_i = f(Y_i)$, $\epsilon = n \hat{s}$, $\nu^2 = \mathrm{Var}(f(Y_i))$ and $f(Y_i) \leq c = \bar{M}$ ($P$-almost surely). 

Lemma~\ref{LemmaBennett} combined with Lemma~\ref{LemmaVariance} and \ref{LemmaBound} 
yields the upper bound
\begin{align}
\Prb(\sum_{i=1}^n f(Y_i) \geq n \hat{s} \mid  \hat{q}) \leq & \exp\left( - \frac{n \hat{s}^2}{2 \vartheta^2 + (2/3) \overline{M} \vartheta \hat{s}}  \right) \indc_{\hat{s} \geq 0} +  \indc_{\hat{s} < 0} \notag \\
	:= & \exp(g(\hat{s})) \indc_{\hat{s} \geq 0} +  \indc_{\hat{s} < 0} := h(\hat{s}) \label{BennettBound} \ens ,
\end{align}
where
\begin{align}
\overline{M} := \left(4 + \sqrt{2}\right) L M^{1/2} \ens , \qquad \nu^2 \leq \vartheta^2 := \ens L^2 m_P^{2} \ens , \notag
\end{align}
and $m_P^{2} = \Esp \left\Vert \Xi(Y) \right\Vert^2$. \\
\item \textbf{"Replace" the estimator $\hat{q}_{\alpha, n}$ with the true quantile $q_{\alpha, n}$ in the bound}

It remains to take the expectation with respect to $\hat{q}$. In order to make it easy, $\hat{q}$ is pull out of the exponential term of the bound. This is done through a Taylor-Lagrange expansion (Lemma~\ref{LemmaTaylorLag}).

Lemma~\ref{LemmaTaylorLag} rewrites the bound in \eqref{BennettBound} as
\begin{align}
\exp\left( - \frac{n s^2}{2 \vartheta^2 + (2/3) \overline{M} \vartheta  s}  \right) \left\{1 + \frac{3 n }{2 \overline{M} \vartheta} \exp\left( \frac{3 |\tilde{q} - q|}{2 \overline{M} \vartheta }  \right) \indc_{\tilde{s} \geq 0} |\hat{s} - s| \right\} \ens , \label{TaylorLag}
\end{align}
where 
\begin{align}
s = L^2 - \frc{q}{n}  \ens , \quad \tilde{s} = L^2 - \frc{\tilde{q}}{n}  \ens , \quad \tilde{q} \in (q \wedge \hat{q}, q \vee \hat{q}) \ens \notag,
\end{align}
and $s \geq 0$ because of the assumption $n > q L^{-2}$.

The mean (with respect to $\hat{q}$) of the right-side multiplicative term of \eqref{TaylorLag} is bounded by
\begin{align}
1 + \frac{3 n }{2 \overline{M} \vartheta } \left\{\Esp_{\hat{q}}\left(\exp\left( \frac{3 |\tilde{q} - q|}{\overline{M} \vartheta }  \right) \indc_{\tilde{s} \geq 0} \right)\right\}^{1/2} \sqrt{\Esp_{\hat{q}} (\hat{s} - s)^2} \ens , \notag 
\end{align}

because of the Cauchy-Schwarz inequality.\\
On one hand, $\Esp (\hat{q} - q)^2 \underset{B \to +\infty}{\to} 0$ (Lemma~\ref{LemmaGapQtl}) implies $\Esp (\tilde{q} - q)^2 \underset{B \to +\infty}{\to} 0$ and thus $\tilde{q} \underset{B \to +\infty}{\to} q_\infty$ weakly where $q_\infty = \lim_{n \to +\infty} q_{\alpha, n}$ (that is almost surely for $q_\infty$ is a constant). Hence
\begin{align}
\Esp_{\hat{q}}\left(\exp\left( \frac{3 |\tilde{q} - q|}{\overline{M} \vartheta }  \right) \indc_{\tilde{s} \geq 0} \right) = & \ens \Esp_{\hat{q}} \left(\left[1 + \frac{o_B(|\hat{q} - q|)}{\overline{M} \vartheta} \right] \indc_{\tilde{s} \geq 0} \right) \notag \\
	\leq & \ens 1 + \frac{\Esp_{\hat{q}} (o_B(|\hat{q} - q|) \indc_{\tilde{s} \geq 0})}{\overline{M} \vartheta} \ens , \notag
\end{align}
where $o_B$ denotes almost sure convergence.\\
Since the variable $|\tilde{q} - q| \indc_{\tilde{s} \geq 0}$ is bounded by the constant $|n L^2 - q| \vee |q|$ for every $B$, it follows from the Dominated Convergence Theorem 
\begin{align}
1 + \frac{\Esp_{\hat{q}} (o_B(|\hat{q} - q|) \indc_{\tilde{s} \geq 0})}{\overline{M} \vartheta} = \ens 1 + \frac{o_B(1)}{\overline{M} \vartheta }
\label{apply.dom.conv.th}
\end{align}
On the other hand, Lemma~\ref{LemmaGapQtl} provides
\begin{align}
\Esp (\hat{s} - s)^2 = \frac{\Esp (\hat{q}-q)^2}{n^2} \leq \frac{C_{1, P} + \alpha C_{2, P} / B }{n^2 \alpha B} \leq \frac{C_{P^b}}{n^2 \alpha B} \ens . 
\end{align}
so that an upper bound for the Type-II error is given by
\begin{align}
& \left(\frac{3}{2} + o_n(1)\right)\exp\left( - \frac{n s^2}{2 \vartheta^2 + (2/3) \overline{M} \vartheta s}  \right) \left\{ 1 +  \frac{3 C_{P^b}}{2 \overline{M} \vartheta \sqrt{\alpha B}} +  \frac{o_B(B^{-1/2})}{\overline{M}^2 \vartheta^2}  \right\} \ens . \label{final.upper.bound} 
\end{align}
which one rewrites as
\begin{align}
 \exp\left( - \frac{n \left(L - \displaystyle\frac{q}{n L} \right)^2}{2 m_P^2 + C m_P M^{1/2}(L^2 - q/n) }    \right) f_1(B, M, L) \ens ,\notag 
\end{align}
where
\begin{align}
f_1(B, M, L) & = (3/2 + o_n(1)) \left(1 + \frac{C_{P^b}}{C^{'} L^2 M^{1/2} m_P \sqrt{\alpha B}} + \frac{o_B(B^{-1/2})}{C^{''} L^4 M m_P^2}\right) \notag \ens ,
\end{align}
and $C = (2/3)(4+\sqrt{2})$, $C^{'} = 2 (4 + \sqrt{2}) / 3$ and $C^{''} = (4 + \sqrt{2})^2$.
\end{enumerate}

Theorem~\ref{PropTypeIIerr} is proved.

\subsection{Auxilary results}
\begin{lemma}{(Bennett's inequality, Theorem 2.9 in \cite{Book_ConcIneq})}
\label{LemmaBennett}
Let $\xi_1, \ldots, \xi_n$ i.i.d. zero-mean variables bounded by $c$ and of variance $\nu^2$.\\
Then, for any $\epsilon > 0$
\begin{align}
\Prb\left(\sum_{i=1}^n \xi_i \geq \epsilon \right) \leq \exp\left( - \frac{\epsilon^2}{2 n \nu^2 + 2 c \nu \epsilon / 3}  \right) \ens .
\end{align}
\end{lemma}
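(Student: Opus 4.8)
The plan is to establish Bennett's inequality by the classical Cram\'er--Chernoff exponential-moment method. First I would reduce the tail to a moment generating function (MGF) estimate: for any $\lambda > 0$, Markov's inequality applied to $\exp(\lambda \sum_{i} \xi_i)$ together with the independence and identical distribution of the $\xi_i$ gives
\begin{align*}
\Prb\left(\sum_{i=1}^n \xi_i \geq \epsilon \right) \leq e^{-\lambda \epsilon} \prod_{i=1}^n \Esp\, e^{\lambda \xi_i} = e^{-\lambda \epsilon} \left(\Esp\, e^{\lambda \xi_1}\right)^n \ens .
\end{align*}
All the difficulty then concentrates in controlling the single MGF $\Esp\, e^{\lambda \xi_1}$ using only the three hypotheses $\Esp \xi_1 = 0$, $\mathrm{Var}(\xi_1) = \nu^2$, and $\xi_1 \leq c$ almost surely.

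The key step is the MGF bound. I would use the elementary fact that $\phi(x) = (e^x - 1 - x)/x^2$ is nondecreasing on $\Reals$. Since $\lambda \xi_1 \leq \lambda c$ almost surely, monotonicity yields the pointwise bound $e^{\lambda \xi_1} - 1 - \lambda \xi_1 \leq (\lambda \xi_1)^2 \phi(\lambda c)$; taking expectations and using $\Esp \xi_1 = 0$ and $\Esp \xi_1^2 = \nu^2$ gives $\Esp\, e^{\lambda \xi_1} \leq 1 + \nu^2 \lambda^2 \phi(\lambda c)$. Applying $1 + u \leq e^u$ then converts this into the clean form
\begin{align*}
\Esp\, e^{\lambda \xi_1} \leq \exp\left( \frac{\nu^2}{c^2}\left(e^{\lambda c} - 1 - \lambda c\right) \right) \ens .
\end{align*}

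Combining the two displays gives $\Prb(\sum_{i} \xi_i \geq \epsilon) \leq \exp(-\lambda \epsilon + (n \nu^2/c^2)(e^{\lambda c} - 1 - \lambda c))$, and I would optimize over $\lambda > 0$. The stationarity equation is solved explicitly by $\lambda = c^{-1}\log(1 + c\epsilon/(n\nu^2))$, producing the sharp Bennett bound $\exp(-(n\nu^2/c^2)\, h(c\epsilon/(n\nu^2)))$ with $h(u) = (1+u)\log(1+u) - u$. The final step is to pass to the stated Bernstein-type form by invoking the elementary inequality $h(u) \geq u^2/(2(1 + u/3))$, valid for all $u \geq 0$, which immediately produces the denominator $2(n \nu^2 + c\epsilon/3)$ and hence the announced bound.

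I expect the main obstacle to be the MGF estimate, and in particular two points that must be handled with care: justifying the monotonicity of $\phi$ (with the removable singularity at the origin), and exploiting that the hypothesis is only a \emph{one-sided} bound $\xi_1 \leq c$ rather than $|\xi_1| \leq c$, so that the comparison $\phi(\lambda \xi_1) \leq \phi(\lambda c)$ is legitimate for every realization. The subsequent Chernoff optimization and the convexity inequality $h(u) \geq u^2/(2 + 2u/3)$ are routine calculus and would be dispatched quickly.
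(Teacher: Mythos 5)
The paper does not actually prove this lemma: it is quoted verbatim from the cited textbook (Theorem 2.9 there is Bennett's inequality), so there is no in-paper argument to compare yours against. Your proposal is the standard and correct Cram\'er--Chernoff derivation: Markov on the exponential moment, the MGF bound $\Esp e^{\lambda \xi_1} \leq \exp\bigl(\tfrac{\nu^2}{c^2}(e^{\lambda c}-1-\lambda c)\bigr)$ via the monotonicity of $x \mapsto (e^x-1-x)/x^2$ (and you are right that this is exactly where the one-sidedness of the hypothesis $\xi_1 \leq c$ is used), the explicit optimizer $\lambda = c^{-1}\log(1+c\epsilon/(n\nu^2))$ giving the Bennett bound with $h(u)=(1+u)\log(1+u)-u$, and finally $h(u) \geq u^2/(2+2u/3)$ to reach Bernstein form. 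All of these steps are sound.

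There is, however, a concrete mismatch you should flag: your argument delivers the denominator $2n\nu^2 + 2c\epsilon/3$, whereas the lemma as printed has $2n\nu^2 + 2c\nu\epsilon/3$, with an extra factor $\nu$ multiplying the linear term. When $\nu < 1$ the printed bound is strictly stronger than Bernstein's, and it is in fact false in general: take $n=1$ and $\xi$ equal to $1$ with probability $p$ and $-p/(1-p)$ otherwise, so $c=1$ and $\nu^2 = p/(1-p)$; with $\epsilon = 1/2$ and $p=0.01$ the left side is $0.01$ while the printed right side is roughly $\exp(-3/(4\sqrt{p})) \approx 5\cdot 10^{-4}$. So your proof establishes the correct (standard) statement but not the literal one; the extra $\nu$ is presumably a typo (consistent with reading ``bounded by $c$'' as $|\xi_i| \leq c\nu$), and it propagates into the paper's equation \eqref{BennettBound} and the constants of Theorem~\ref{PropTypeIIerr}. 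State explicitly which version you are proving.
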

\begin{lemma}{(Theorem 2.9. in \cite{BoucheronOrderStat})}
\label{LemmaGapQtl}
Assume $\alpha < 1/2$. Then,
\begin{equation}
\mathrm{Var}(\hat{q}_{\alpha, n}) \leq \frac{C_{P^b}}{\alpha B} \ens ,
\end{equation}
where $C_{P^b}$ only depends on the bootstrap distribution (of $[n \hLMMD^2]^b_{fast}$).
\end{lemma}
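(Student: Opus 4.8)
The plan is to recognize $\hat{q}_{\alpha,n}$ as a single order statistic of the $B$ i.i.d.\ bootstrap replications and to bound its variance by the Efron--Stein inequality, exactly as in \cite{BoucheronOrderStat}. Write $W_1,\dots,W_B$ for the i.i.d.\ copies of $[n\hLMMD^2]^b_{fast}\sim P^b$, let $W_{(1)}\le\cdots\le W_{(B)}$ be their increasing reordering, and set $m=\lfloor(1-\alpha)B\rfloor$, so that $\hat{q}_{\alpha,n}=W_{(m)}$. The number of replications lying strictly above this quantile is $B-m$, of order $\alpha B$, and this count is ultimately what produces the $\alpha$-dependence of the bound.

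First I would apply the Efron--Stein inequality (as stated in \cite{Book_ConcIneq}) in its resampling form
\begin{align*}
\mathrm{Var}(\hat{q}_{\alpha,n}) \le \tfrac{1}{2}\sum_{i=1}^{B} \Esp\!\left[\bigl(W_{(m)} - W_{(m)}^{(i)}\bigr)^2\right] \ens ,
\end{align*}
where $W_{(m)}^{(i)}$ is the $m$-th order statistic obtained after replacing $W_i$ by an independent copy $W_i'$. The key combinatorial observation is that a single resampling can move $W_{(m)}$ only to an \emph{adjacent} order statistic: if $W_i\le W_{(m)}$ and the fresh value satisfies $W_i'>W_{(m)}$ then $W_{(m)}$ jumps up to $W_{(m+1)}$, whereas if $W_i> W_{(m)}$ and $W_i'\le W_{(m)}$ it drops to $W_{(m-1)}$; in all other cases it is unchanged. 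Conditioning on the order statistics, the probability of an upward move is $1-F(W_{(m)})\approx\alpha$ across the $m\approx(1-\alpha)B$ indices that can trigger it, and symmetrically the probability of a downward move is $F(W_{(m)})\approx 1-\alpha$ across the $B-m\approx\alpha B$ remaining indices. Collecting terms reduces the whole sum to a multiple of the expected squared spacings,
\begin{align*}
\mathrm{Var}(\hat{q}_{\alpha,n}) \lesssim \alpha(1-\alpha)B\,\Big(\Esp[(W_{(m+1)}-W_{(m)})^2] + \Esp[(W_{(m)}-W_{(m-1)})^2]\Big) \ens .
\end{align*}

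The remaining step is to control the expected squared spacings near the $(1-\alpha)$-quantile. Passing to the probability integral transform $U_i=F(W_i)$, uniform on $[0,1]$, and using the exact law of uniform spacings, $\Esp[(U_{(m+1)}-U_{(m)})^2]=2/[(B+1)(B+2)]=O(B^{-2})$, one transfers back through the quantile function: each spacing of $W$ is the image of a uniform spacing under $F^{-1}$, hence is of order $B^{-1}$ times $\sup_u (F^{-1})'(u)=\sup_u 1/f(F^{-1}(u))$ over a neighbourhood of $1-\alpha$. This supremum is finite, depends only on the bootstrap distribution, and is precisely what is absorbed into $C_{P^b}$. Substituting $\Esp[\text{spacing}^2]=O\big(C_{P^b}/B^2\big)$ gives $\mathrm{Var}(\hat{q}_{\alpha,n})\lesssim \alpha(1-\alpha)C_{P^b}/B\le C_{P^b}/(\alpha B)$, the last inequality using $\alpha(1-\alpha)\le 1/\alpha$, valid for $\alpha\in(0,1)$.

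The main obstacle is exactly the last paragraph: justifying a uniform lower bound on the density of $P^b$ in a neighbourhood of its $(1-\alpha)$-quantile, so that $F^{-1}$ is locally Lipschitz there. Since $[n\hLMMD^2]^b_{fast}$ is asymptotically the squared norm of a Gaussian element of $H(\bar k)$, hence a weighted sum of $\chi^2$ variables by Proposition~\ref{prop.altern.parambt}, it possesses a smooth positive density on the interior of its support, which makes this regularity plausible; pinning down the constant $C_{P^b}$ explicitly, however, is the delicate point, and it is here that the direct appeal to \cite{BoucheronOrderStat} is most economical.
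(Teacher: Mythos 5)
First, be aware that the paper does not actually prove Lemma~\ref{LemmaGapQtl}: it is imported verbatim as Theorem~2.9 of \cite{BoucheronOrderStat}, so there is no internal proof to compare yours against. Your reconstruction --- Efron--Stein applied to a single order statistic, reduction to expected squared spacings, then a quantile-transform bound on the spacings --- is in substance the argument of that reference, so you have chosen the right route, and your combinatorial observation that resampling one coordinate moves $W_{(m)}$ at most to an adjacent order statistic is correct and is the heart of the matter.

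That said, two steps of your sketch are genuinely incomplete. (i) In the accounting $\mathrm{Var}(\hat q_{\alpha,n}) \lesssim \alpha(1-\alpha)B\,\Esp[\text{spacing}^2]$ you replace the random quantity $1-F(W_{(m)})$ by $\alpha$ inside an expectation in which it is correlated with the spacing $(W_{(m+1)}-W_{(m)})^2$; this needs either a Cauchy--Schwarz step or simply the crude bound $1-F(W_{(m)})\le 1$, which still yields $O(B\cdot B^{-2})=O(B^{-1})\le C_{P^b}/(\alpha B)$ and is the honest thing to write. (ii) More seriously, the bound $\Esp[(W_{(m+1)}-W_{(m)})^2]=O(C_{P^b}B^{-2})$ does not follow from a density lower bound merely \emph{in a neighbourhood} of the $(1-\alpha)$-quantile: the expectation also charges the (rare) event that $U_{(m)}$ lands far from $1-\alpha$, where $F^{-1}$ may fail to be Lipschitz and the spacing may be large; for an unbounded bootstrap distribution this contribution is not automatically negligible. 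The cited theorem circumvents this by working with the R\'enyi representation of order statistics and a hazard-rate condition on $P^b$ (which is also how the $1/(\alpha B)$ form, rather than your stronger $\alpha(1-\alpha)/B$, arises), and that condition would itself have to be verified for the law of $[n \hLMMD^2]^b_{fast}$, e.g.\ through its weighted-chi-square limit from Proposition~\ref{prop.altern.parambt}. You correctly flag this as the delicate point, but as written it remains an assumption rather than a proof; keeping the external citation is indeed the economical course.
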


\begin{lemma}
\label{LemmaBound}
If $Y \in \hsp_0 \subseteq \hsp$ $P$-almost surely and $\sup_{x, y \in \hsp_0} |\bar{k}(x, y)| = M$, then for any $y \in \hsp_0$ 
\begin{align}
|f(y)| \leq \overline{M} := (4 + \sqrt{2})  L \sqrt{M} \ens .
\end{align}

\end{lemma}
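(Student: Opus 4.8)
The plan is to reduce everything to a bound on $\|\Xi(y)\|_{H(\bk)}$ and then control the three summands of $\Xi(y)$ separately. Since $f(y)=\langle \bar{\mu}_P - N o \TT(\theta_0),\, \Xi(y)\rangle$ and, by the notation fixed above the theorem, $L=\Delta=\|\bar{\mu}_P - N o \TT(\theta_0)\|_{H(\bk)}$, the Cauchy--Schwarz inequality in $H(\bk)$ immediately gives $|f(y)|\le L\,\|\Xi(y)\|_{H(\bk)}$. Hence it suffices to prove $\|\Xi(y)\|_{H(\bk)}\le (4+\sqrt{2})\,M^{1/2}$ for every $y\in\hsp_0$.

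Recalling $\Xi(y)=D_{\theta_0}(N o \TT)[\Psi(y)] - \bar{\phi}(y) + \bar{\mu}_P$, I would split the norm by the triangle inequality into three pieces. The two kernel terms are easy: by the reproducing property $\|\bar{\phi}(y)\|_{H(\bk)}^2=\bk(y,y)\le M$, and since $\bar{\mu}_P=\Esp_{Y\sim P}\bar{\phi}(Y)$ with $Y\in\hsp_0$ $P$-almost surely, Jensen's inequality gives $\|\bar{\mu}_P\|_{H(\bk)}^2\le \Esp_{Y,Y'}\bk(Y,Y')\le M$. These two contribute $2M^{1/2}$, so the whole lemma comes down to establishing $\|D_{\theta_0}(N o \TT)[\Psi(y)]\|_{H(\bk)}\le (2+\sqrt{2})\,M^{1/2}$.

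This last estimate is the main obstacle, and the difficulty is genuinely one of infinite dimension: $\Psi(y)$ grows with $\|y-m_0\|$ and the Gaussian score carries a $\Sigma_0^{-1}$, so a crude operator-norm bound $\|D_{\theta_0}(N o \TT)\|_{\mathrm{op}}\,\|\Psi(y)\|_{\Theta}$ cannot produce a bound depending on $M$ alone. The way around this is to use that $N o \TT(\theta)=\Esp_{Z\sim\mathcal{N}(\TT(\theta))}\bar{\phi}(Z)$ is an expectation of the bounded feature map. Differentiating under the expectation and using that the score-type weight is centred, I would write $D_{\theta_0}(N o \TT)[\Psi(y)]=\mathrm{cov}_{Z}\!\big(\bar{\phi}(Z),\,w(Z,y)\big)$, a covariance between the $H(\bk)$-valued variable $\bar{\phi}(Z)-N o \TT(\theta_0)$, whose second moment satisfies $\Esp\|\bar{\phi}(Z)-N o \TT(\theta_0)\|_{H(\bk)}^2\le M$, and a scalar weight $w(Z,y)$ encoding the mean- and covariance-directions of $\Psi(y)$. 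It is precisely this bounded feature factor --- rather than the score --- that keeps the quantity finite. Bounding the covariance by $M^{1/2}$ times the relevant absolute moments of $w(Z,y)$ and evaluating the resulting Gaussian integrals for the two components of $\Psi(y)=(y-m_0,\,(y-m_0)^{\otimes 2}-\Sigma_0)$ is expected to yield exactly the constant $2+\sqrt{2}$.

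Collecting the three contributions then gives $\|\Xi(y)\|_{H(\bk)}\le \big(2+(2+\sqrt{2})\big)M^{1/2}=(4+\sqrt{2})M^{1/2}$, and combining with the first step yields $|f(y)|\le (4+\sqrt{2})\,L\,M^{1/2}=\overline{M}$, which is the claim. The only delicate points in this programme are justifying the differentiation under the expectation and the covariance representation in the infinite-dimensional setting, and carrying out the Gaussian moment computations that produce the sharp constant.
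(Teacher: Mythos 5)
Your opening moves coincide with the paper's: Cauchy--Schwarz gives $|f(y)|\le L\,\Vert \Xi(y)\Vert_{H(\bk)}$, the triangle inequality splits $\Xi(y)$ into three terms, and the two feature-map terms are handled exactly as in the paper, via $\Vert\bar{\phi}(y)\Vert^2_{H(\bk)}=\bk(y,y)\le M$ and $\Vert\bar{\mu}_P\Vert^2_{H(\bk)}=\Esp\,\bk(Y,Y')\le M$, contributing $2\sqrt{M}$.

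The gap is in the one step you defer: the bound $\Vert D_{\theta_0}(N\circ\TT)[\Psi(y)]\Vert_{H(\bk)}\le(2+\sqrt{2})\sqrt{M}$. Your covariance representation $D_{\theta_0}(N\circ\TT)[\Psi(y)]=\mathrm{cov}_Z\bigl(\bar{\phi}(Z),w(Z,y)\bigr)$ is plausible, but the Cauchy--Schwarz estimate it suggests is $\sqrt{M}\cdot\bigl(\Esp\,w^2(Z,y)\bigr)^{1/2}$, and the weight $w(Z,y)$ is linear/quadratic in $\Psi(y)=(y-m_0,(y-m_0)^{\otimes 2}-\Sigma_0)$, so its moments grow with $\Vert y-m_0\Vert_\hsp$. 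Boundedness of $\bk$ on $\hsp_0$ does not bound $\Vert y\Vert_\hsp$ on $\hsp_0$ (a Gaussian kernel is bounded on all of $\hsp$), so no Gaussian moment computation will return a constant depending on $M$ alone; the asserted constant $2+\sqrt{2}$ cannot emerge from this route, and the step is stated as an expectation rather than proved. The paper does something different: it writes the derivative as a limit involving the differences $N\circ\TT(\theta_0+t\Psi(y))-N\circ\TT(\theta_0)$ and bounds the squared norm of such a difference of embeddings by the three kernel expectations $\Esp\bk(Z,Z)+\Esp\bk(Z',Z')-2\Esp\bk(Z,Z')$, each controlled by $M$ (resp.\ $2M$), whence $(1+1+\sqrt{2})\sqrt{M}$ by subadditivity of the square root --- i.e., the bound comes from the uniform bound on the gap between any two kernel embeddings, not from moments of a score-type weight. (Even there one must account for the $1/t$ normalisation in the difference quotient, which the paper leaves implicit; but that is the mechanism your argument needs to reproduce, and your current plan does not.)
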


\begin{proof}
\begin{align*}
|f(y)| = & \ens \left|  \langle  \bar{\mu}_P - N o \TT(\theta_0), D_{\theta_0} (N o \TT) (\Psi(y)) - \bar{\phi}(y) + \bar{\mu}_P\rangle   \right| \notag \\
			 \leq & L  \left\Vert   D_{\theta_0} (N o \TT) (\Psi(y)) - \bar{\phi}(y) + \bar{\mu}_P \right\Vert    \notag \\
			 \leq & \ens L \left( \left[ \lim_{t \to 0} \Esp_{Z, Z^{'} \sim N o \TT(\theta_0 + t \Psi(y))} \bar{k}(Z, Z) + \bar{k}(Z^{'}, Z^{'}) - 2 \bar{k}(Z, Z^{'}) \right]^{1/2} \right. \\
			 & \qquad \left. + \sqrt{\bar{k}(y, y)} + \sqrt{\Esp \bar{k}(Y, Y^{'})}  \right)   \notag \\
			 \leq & \ens  L \left( \sqrt{M} + \sqrt{M} + \sqrt{2 M} + \sqrt{M} + \sqrt{M} \right)   \notag \\
			 \leq & \ens L (4 + \sqrt{2}) \sqrt{M} := \overline{M}   \ens . \notag
\end{align*}
\end{proof}

\begin{lemma}
\label{LemmaVariance}
\begin{align}
\nu^2 \leq \vartheta^2 := L^2 m_P^{(2)}  \ens .
\end{align}
\end{lemma}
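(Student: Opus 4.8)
The plan is to reduce the variance to a second moment and then extract the factor $L^2$ by a single use of the Cauchy--Schwarz inequality in $H(\bar k)$. First I would note that for any square-integrable real random variable one has $\mathrm{Var}(f(Y)) \leq \Esp[f(Y)^2]$, so it suffices to bound $\Esp[f(Y)^2]$. This step conveniently avoids having to center $f$, which matters because under $\hypot{1}$ the mean $\Esp f(Y)$ need not vanish.

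Next I would recall that $f(Y) = \langle \bar{\mu}_P - N o \TT(\theta_0), \Xi(Y) \rangle$ with $\Xi(Y) = D_{\theta_0}(N o \TT)(\Psi(Y)) - \bar{\phi}(Y) + \bar{\mu}_P$, and apply Cauchy--Schwarz in $H(\bar k)$ to this inner product. This gives, pointwise in $Y$,
\begin{align*}
f(Y)^2 \leq \Vert \bar{\mu}_P - N o \TT(\theta_0) \Vert^2 \, \Vert \Xi(Y) \Vert^2 = L^2 \, \Vert \Xi(Y) \Vert^2 \ens ,
\end{align*}
where the equality uses $\Vert \bar{\mu}_P - N o \TT(\theta_0) \Vert = L$ by the definition of $L$ (the MMD gap $\Delta$). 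Taking expectations and invoking the definition $m_P^2 = \Esp \Vert \Xi(Y) \Vert^2$ then yields $\nu^2 \leq \Esp[f(Y)^2] \leq L^2 \, \Esp \Vert \Xi(Y) \Vert^2 = L^2 m_P^2 = \vartheta^2$, which is exactly the assertion.

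There is essentially no genuine obstacle here, the whole argument being Cauchy--Schwarz followed by monotonicity of the expectation; the only point worth verifying is that $m_P^2$ is finite, so the bound is non-vacuous. This is already contained in the proof of Lemma~\ref{LemmaBound}, where the boundedness of $\bar k$ (by $M$) is used to show $\Vert \Xi(Y) \Vert \leq (4 + \sqrt{2}) \sqrt{M}$ $P$-almost surely, whence $m_P^2 \leq (4 + \sqrt{2})^2 M < +\infty$. Alternatively, finiteness follows from the assumed finite fourth moment $\Esp_P \Vert Y - m_0 \Vert^4 < +\infty$, which controls $\Esp \Vert \Psi(Y) \Vert^2$, together with the continuity (hence boundedness) of the Fr\'echet derivative $D_{\theta_0}(N o \TT)$.
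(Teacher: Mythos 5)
Your proof is correct and follows essentially the same route as the paper: bound the variance by the second moment, then apply Cauchy--Schwarz in $H(\bar k)$ to pull out $\Vert \bar{\mu}_P - N o \TT(\theta_0)\Vert^2 = L^2$ and recognize $\Esp\Vert\Xi(Y)\Vert^2 = m_P^2$. Your remark that $\mathrm{Var}(f(Y)) \leq \Esp f^2(Y)$ should be an inequality rather than the paper's stated equality (since $\Esp\Psi(Y)$ need not vanish under $\hypot{1}$) is a small but legitimate refinement, and the finiteness check via Lemma~\ref{LemmaBound} is a sensible addition.
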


\begin{proof}
\begin{align*}
\nu^2 := \mathrm{Var}(f(Y)) = & \ens \Esp f^2(Y_i) \notag \\
	= & \Esp \langle  \bar{\mu}_P - N o \TT(\theta_0), D_{\theta_0} (N o \TT) (\Psi(Y)) - \bar{\phi}(Y) + \bar{\mu}_P\rangle ^2 \\   
	\leq & \ens L^2 \underbrace{\Esp \left\Vert D_{\theta_0} (N o \TT) (\Psi(Y)) - \bar{\phi}(Y) + \bar{\mu}_P  \right\Vert^2}_{:= m_P^2} \ens . \notag
\end{align*}
\end{proof}

\begin{lemma}
\label{LemmaTaylorLag}
Let $h$ be defined as in \eqref{BennettBound}. Then,
\begin{align}
h(\hat{s}) \leq \exp\left( - \frac{n s^2}{2 \vartheta^2 + (2/3) \overline{M} \vartheta  s}  \right) \left\{1 + \frac{3 n }{2 \overline{M} \vartheta} \exp\left( \frac{3 |\tilde{q} - q|}{2 \overline{M} \vartheta }  \right) \indc_{\tilde{s} \geq 0} |\hat{s} - s| \right\} \ens , 
\end{align}
where
\begin{align*}
s = L^2 - \frc{q}{n}  \ens , \quad \tilde{s} = L^2 - \frc{\tilde{q}}{n}  \ens , \quad \tilde{q} \in (q \wedge \hat{q}, q \vee \hat{q}) \ens .
\end{align*}
\end{lemma}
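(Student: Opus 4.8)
The plan is to treat $h$ from \eqref{BennettBound} as a once-differentiable function of its scalar argument and to perform a first-order Taylor--Lagrange expansion around the deterministic point $s$. Writing $g(\sigma) = -\,n\sigma^2/(2\vartheta^2 + (2/3)\overline{M}\vartheta\sigma)$, so that $h(\sigma) = \exp(g(\sigma))\indc_{\sigma\ge 0} + \indc_{\sigma<0}$, the first thing I would check is that $h$ is genuinely $C^1$ on all of $\Reals$: on $\{\sigma<0\}$ it is constant, on $\{\sigma\ge 0\}$ it equals $\exp(g(\sigma))$, and since $g(0)=0$ and $g'(0)=0$ both the value and the one-sided derivatives match at the junction $\sigma=0$. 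This smoothness is what legitimizes applying the mean value theorem across the possibly sign-changing segment joining $s$ and $\hat s$.

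Next I would invoke the mean value theorem to write $h(\hat s) = h(s) + h'(\tilde s)(\hat s - s)$ for some $\tilde s$ strictly between $s$ and $\hat s$; because $s = L^2 - q/n$ and $\hat s = L^2 - \hat q/n$, this Lagrange point is exactly $\tilde s = L^2 - \tilde q/n$ with $\tilde q \in (q\wedge\hat q,\, q\vee\hat q)$, which identifies the $\tilde q$ appearing in the statement. The case $\tilde s < 0$ is immediate: there $h'(\tilde s)=0$, so $h(\hat s)=h(s)=\exp(g(s))$ (recall $s\ge 0$ under the running assumption $n>qL^{-2}$), and the indicator $\indc_{\tilde s\ge 0}$ switches off the correction term, so the claimed inequality holds with equality. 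The substantive case is $\tilde s \ge 0$, where $h'(\tilde s) = g'(\tilde s)\exp(g(\tilde s)) \le 0$, so that $h(\hat s) \le h(s) + |g'(\tilde s)|\exp(g(\tilde s))\,|\hat s - s|$.

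The crux is then a sharp uniform bound on $|g'|$. Setting $a = 2\vartheta^2$, $b = (2/3)\overline{M}\vartheta$ and substituting $t = b\sigma/a \ge 0$, a short computation collapses $g'$ to $|g'(\sigma)| = (n/b)\,\bigl(1 - (1+t)^{-2}\bigr)$, whence $|g'(\sigma)| \le n/b = 3n/(2\overline{M}\vartheta)$ for every $\sigma \ge 0$. I would emphasize that this exact simplification, rather than a crude bound on $(2a+b\sigma)/(a+b\sigma)^2$, is what produces the correct constant $3n/(2\overline{M}\vartheta)$ instead of twice that; this is the step I expect to be the main obstacle, since a careless estimate loses a factor of two. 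The same Lipschitz bound $|g(\tilde s)-g(s)| \le (n/b)|\tilde s - s|$ lets me write $\exp(g(\tilde s)) \le \exp(g(s))\exp\bigl((n/b)|\tilde s - s|\bigr)$, and the identity $n|\tilde s - s| = |\tilde q - q|$ converts the exponent into $3|\tilde q - q|/(2\overline{M}\vartheta)$. Combining these with $h(s)=\exp(g(s))$ yields $h(\hat s) \le \exp(g(s))\{1 + (3n/(2\overline{M}\vartheta))\exp(3|\tilde q-q|/(2\overline{M}\vartheta))\indc_{\tilde s\ge 0}|\hat s-s|\}$, which is precisely the asserted inequality.
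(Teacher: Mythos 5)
Your proof is correct and follows essentially the same route as the paper: a first-order Taylor--Lagrange expansion of $h$ around $s$, the uniform bound $|g'|\le 3n/(2\overline{M}\vartheta)$, and the Lipschitz estimate on $g$ converting $n|\tilde{s}-s|$ into $|\tilde{q}-q|$. You in fact supply slightly more detail than the paper, namely the explicit computation $|g'(\sigma)|=(n/b)\bigl(1-(1+t)^{-2}\bigr)$ justifying the derivative bound that the paper only asserts, and an explicit treatment of the $\tilde{s}<0$ case.
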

\begin{proof}
A Taylor-Lagrange expansion of order 1 can be derived for $h(\hat{s})$ since the derivative of $h$ 
\begin{align}
h'(x) = - \frc{(2/3) n \overline{M} \vartheta   x^2 + 4 n \vartheta^2 x}{(2 \vartheta^2 + (2/3) \overline{M} \vartheta  x )^2} \exp\left( - \frc{n x^2}{2 \vartheta^2 + (2/3) \overline{M} \vartheta x}  \right) \indc_{x \geq 0} \notag \ens ,
\end{align}
is well defined for every $x \in \mathbb{R}$ (in particular, the left-side and right-ride derivatives at $x=0$ coincide).\\
Therefore $h(\hat{s})$ equals 
\begin{align}
h(s) + h^{'}(\tilde{s}) & (\hat{s} - s)  \notag \\
& = \exp\left( - \frac{n s^2}{2 \vartheta^2 + (2/3) \overline{M} \vartheta  s}  \right) \left[ 1 + \exp\left( g(s) - g(\tilde{s})  \right) g'(\tilde{s}) \indc_{ \tilde{s} \geq 0} (\hat{s} - s)     \right]  
 \ens ,
\end{align}
where 
\begin{align}
s = L^2 - \frc{q}{n} \ens , \quad \tilde{s} = L^2 - \frc{\tilde{q}}{n}  \ens , \quad \tilde{q} \in (q \wedge \hat{q}, q \vee \hat{q}) \ens , \notag
\end{align}
and $s \geq 0$ because of the assumption $n > q L^{-2}$.\\

For every $x, y > 0$, $|g'(x)| \leq 3 n / (2 \overline{M} \vartheta)$ and then $|g(x) - g(y)| \leq 3 n |x - y| / (2 \overline{M} \vartheta )$. It follows
\begin{align*}
|g'(\tilde{s})| \leq \frac{3 n }{2 \overline{M} \vartheta } \ens ,
\end{align*}
and
\begin{align}
\exp\left( g(s) - g(\tilde{s})  \right) \leq \exp\left( \frac{3 n}{2 \overline{M} \vartheta } |s - \tilde{s}|  \right) =  \exp\left( \frac{3 |\tilde{q} - q^{'}|}{2 \overline{M} \vartheta }  \right)  \ens . \notag
\end{align}

Lemma~\ref{LemmaTaylorLag} is proved.
\end{proof}

\end{document}